\font\msbm=msbm10
\numberwithin{equation}{section}
\theoremstyle{plain}
\newtheorem{theorem}{Theorem}[section]
\newtheorem{lemma}[theorem]{Lemma}
\newtheorem{corollary}[theorem]{Corollary}
\newtheorem{proposition}[theorem]{Proposition}
\newtheorem{open}[theorem]{Open Problem}
\newtheorem{remark}[theorem]{Remark}
\def\mathbb#1{\hbox{\msbm{#1}}}
\newcommand{\be}{\boldsymbol{e}}
\newcommand{\bg}{\boldsymbol{g}}
\newcommand{\bu}{\boldsymbol{u}}
\newcommand{\bv}{\boldsymbol{v}}
\newcommand{\bw}{\boldsymbol{w}}
\newcommand{\bx}{\boldsymbol{x}}
\newcommand{\by}{\boldsymbol{y}}
\newcommand{\bz}{\boldsymbol{z}}
\newcommand{\bone}{\boldsymbol{1}}
\newcommand{\btheta}{\boldsymbol{\theta}}
\newcommand{\BA}{\boldsymbol{A}}
\newcommand{\BE}{\boldsymbol{E}}
\newcommand{\BJ}{\boldsymbol{J}}
\newcommand{\BL}{\boldsymbol{L}}
\newcommand{\BQ}{\boldsymbol{Q}}
\newcommand{\BR}{\boldsymbol{R}}
\newcommand{\BU}{\boldsymbol{U}}
\newcommand{\BV}{\boldsymbol{V}}
\newcommand{\BW}{\boldsymbol{W}}
\newcommand{\BX}{\boldsymbol{X}}
\newcommand{\BY}{\boldsymbol{Y}}
\newcommand{\BZ}{\boldsymbol{Z}}
\newcommand{\BDelta}{\boldsymbol{\Delta}}
\newcommand{\bzero}{\boldsymbol{0}}
\newcommand{\pa}{\partial}
\newcommand{\I}{\boldsymbol{I}}
\newcommand{\RR}{\mathbb{R}}
\newcommand{\lag}{\langle}
\newcommand{\rag}{\rangle}
\newcommand{\Tr}{\text{Tr}}
\newcommand{\eps}{\epsilon}
\renewcommand{\Pr}{\mathbb{P}}
\newcommand*\diff{\mathop{}\!\mathrm{d}}
\DeclareMathOperator{\Real}{Re}
\DeclareMathOperator{\Var}{Var}
\DeclareMathOperator{\ddiag}{ddiag}
\DeclareMathOperator{\E}{\mathbb{E}}
\DeclareMathOperator{\diag}{diag}
\DeclareMathOperator{\Imag}{Im}
\newcommand{\mi}{\mathrm{i}}
\begin{document}
\title{\bf On the Landscape of Synchronization Networks: \\ A Perspective from Nonconvex Optimization}

\author{Shuyang Ling\thanks{Courant Institute of Mathematical Sciences and Center for Data Science, New York University, NY 10012.},~~Ruitu Xu\thanks{Courant Institute of Mathematical Sciences, New York University, NY 10012. },~~Afonso S. Bandeira$^{*}$\thanks{Afonso S. Bandeira was partially supported by NSF grants DMS-1712730 and DMS-1719545, and by a grant from the Sloan Foundation.}}
\maketitle

\begin{abstract}
Studying the landscape of nonconvex cost function is key towards a better understanding of optimization algorithms widely used in signal processing, statistics, and machine learning. Meanwhile, the famous Kuramoto model has been an important mathematical model to study the synchronization phenomena of coupled oscillators over various network topologies. 
In this paper, we bring together these two seemingly unrelated objects by investigating the optimization landscape of a nonlinear function $E(\boldsymbol{\theta}) = \frac{1}{2}\sum_{1\leq i,j\leq n}  a_{ij}(1-\cos(\theta_i - \theta_j))$ associated to an underlying network and exploring the relationship between the existence of local minima and network topology. This function arises naturally in Burer-Monteiro method applied to $\mathbb{Z}_2$ synchronization as well as matrix completion on the torus. Moreover, it corresponds to the energy function of the homogeneous Kuramoto model on complex networks for coupled oscillators. 
We prove the minimizer of the energy function is unique up to a global translation under deterministic dense graphs and Erd\H{o}s-R\'enyi random graphs with tools from optimization and random matrix theory. Consequently, the stable equilibrium of the corresponding homogeneous Kuramoto model is unique and the basin of attraction for the synchronous state of these coupled oscillators is the whole phase space minus a set of measure zero. In addition, our results address when the Burer-Monteiro method recovers the ground truth exactly from highly incomplete observations in $\mathbb{Z}_2$ synchronization and shed light on the robustness of nonconvex optimization algorithms against certain types of so-called monotone adversaries. Numerical simulations are performed to illustrate our results.

\end{abstract}

\section{Introduction}
Nonconvex optimization plays a crucial role in the mathematics of signal processing~\cite{BBV16,ChenC18,LZT18,SunQW15,SQW17,SunQW16,ZKW18}, statistics~\cite{CL17,MPW16}, and machine learning~\cite{CG18,GeLM16,GM17,SJL17,SunL16}, and has attracted substantial attention in the past few years. Unlike convex optimization, nonconvex objective functions often exhibit complex geometric structures, making it essential to analyze the optimization landscape and to characterize the location and number of local optima and critical points respectively.

This paper concerns the optimization landscape of the following nonconvex function, 
\begin{equation}\label{def:E}
E(\btheta) := \frac{1}{2}\sum_{1\leq i,j\leq n}  a_{ij}(1-\cos(\theta_i - \theta_j))
\end{equation}
where $\BA = (a_{ij})_{1\leq i,j\leq n}$ with $a_{ij}\geq 0$ is the weight matrix of an undirected graph. As a special case, $\BA$ can be an adjacency matrix. 
We want to address the following key question:
\begin{quote}
{\bf Key question: }{\em What is the relationship between the existence of local minima and the topology of the network?}
\end{quote}

One may wonder what motivates us to study the optimization landscape of $E(\btheta)$ and why it is an important and valuable topic. 
Surprisingly, this seemingly simple function appears in numerous literature in mathematics, physics, engineering, and computer science. Despite the tremendous efforts, many mysteries behind this simple function still remain unsolved. 
We briefly introduce how this function emerges in various contexts in the next sections.

Before we move on, we first have a preliminary analysis of $E(\btheta).$
Note that it suffices to consider $E(\btheta)$ over $[0, 2\pi)^n$ due to the periodicity. A direct computation implies that $\btheta_0 \coloneqq \bzero$ modulo a global translation, i.e., $\theta_i =\theta_j$ for all $i\neq j$, is a global minimizer of $E(\btheta)$, independent of the underlying graph $\BA$ (from now on, we use $\BA$ to denote the weight (adjacency) matrix as well as the graph it represents). However, it is unclear whether there exist~\emph{spurious local minima} besides the global minimum achieved at $\btheta=\btheta_0.$ In particular, we say the optimization landscape of $E(\btheta)$ is~\emph{benign} if there is no spurious local minimum at all. 

\subsection{Group synchronization, matrix completion, and monotone adversaries}
Suppose there are $n$ group elements $\{g_i\}_{i=1}^n$ of a group $G$. Instead of observing these elements directly, only a subset of their noisy offsets $g_ig_j^{-1}$ is available. The goal is to recover all the group elements up to a global phase from $\{g_ig_j^{-1}\}_{\{(i,j)\in\mathcal{E}\}}$ where $\mathcal{E}$ denotes the index set of available observations. This is called the~\emph{group synchronization} problem, see~\cite{AbbeBBS14,AMMS17,BBS17,BBV16,B16,ChenC18,PWBM16,ZB18} for more details. 

The simplest example of group synchronization is $\mathbb{Z}_2$ synchronization~\cite{AbbeBBS14} where $g_i$ is assumed to belong to $\{\pm 1\}$ and thus the offsets reduce to $g_ig_j^{-1} = g_ig_j.$ 
More precisely, we have $\BY$ as the observed data, which is the entrywise product of a binary symmetric matrix $\BA\in\{0,1\}^{n\times n}$ and the perturbed ground truth $\bg\bg^{\top} + \BW$, i.e.,
\[
\BY = \BA\circ (\bg\bg^{\top} + \BW)
\]
where $\BW$ is the noise matrix and $``\circ"$ denotes the entrywise multiplication. Without the group constraints on $\{g_i\}_{i=1}^n$, it is exactly a matrix completion problem~\cite{CandesR09,GeLM16,Recht11,SunL16} which originally comes from the {\em Netflix Prize} problem.

One possible formulation is to maximize the following combinatorial quadratic form
\[
\max_{\bz\in\{\pm 1\}^n} \bz^{\top}\BY\bz
\]
and then use the maximizer as an estimator of $\bg$ from its partial noisy measurements. In fact, the program above is equivalent to the least squares loss function after a simple transformation. However,  solving this program is NP-hard in general. One may consider semidefinite programming (SDP) relaxation as an alternative by trying to recover the rank-1 matrix $\bg\bg^{\top}$ instead of individual group elements:
\[
\max~\Tr( \BY\BZ) \quad \text{s.t. } \BZ\succeq 0, ~Z_{ii} = 1,~\forall\, 1\leq i\leq n.
\]
Note the ground truth $\bg\bg^{\top}$ satisfies the two constraints above. Under mild conditions, this SDP relaxation is proven to recover $\bg\bg^{\top}$ exactly~\cite{AbbeBBS14}. Despite the effectiveness of SDP relaxation~\cite{RFP10}, it tends to suffer from a high computational complexity. 
Hence, numerous efforts have been taken in order to find efficient and robust gradient-descent-based methods as alternatives to solve the SDP arising from the relaxation of the low-rank matrix recovery problem.

The approach proposed by Burer and Monteiro in~\cite{BurerM03,BurerM05} may be arguably
one of the most successful methods to tackle these otherwise difficult problems by conveniently keeping $\BZ$ in a factorized form and taking full advantage of the low-rank property.
By letting $\BZ = \BQ\BQ^{\top}$ where $\BQ\in\RR^{n\times q}$ with $q<n$,  we have
\begin{equation}\label{prog:bmq}
\max~\Tr(\BY\BQ \BQ^{\top}) \quad \text{s.t. } (\BQ\BQ^{\top})_{ii} = 1,~\forall\, 1\leq i\leq n.
\end{equation}
In other words, each row of $\BQ$ needs to be normalized. These constraints yield a search space consisting of a Riemannian manifold, and Riemannian gradient descent is used to solve this program~\cite{AMS09,BBV16}. 

In particular, $E(\btheta)$ is equivalent to Burer-Monteiro method~\cite{BBV16,BurerM03,BurerM05} applied to $\mathbb{Z}_2$ synchronization on networks when $q=2$. Without loss of generality, we assume $\bg = \bone_n$ by considering $\diag(\bg)\BY\diag(\bg)$ instead of $\BY$.
The $i$-th row of $\BQ$ as $[\cos\theta_i,~\sin\theta_i]$ gives $(\BQ\BQ^{\top})_{ij} = \cos(\theta_i - \theta_j)$. Thus we have an equivalent form as 
\begin{equation}\label{prog:E1}
\min_{\btheta\in\RR^n}~~\sum_{i,j} a_{ij}(1 + w_{ij}) (1-\cos(\theta_i - \theta_j))
\end{equation}
which is in the exact form of $E(\btheta)$.

Note that the Burer-Monteiro method yields a nonconvex cost function. This fact immediately brings up a number of questions: when does this Burer-Monteiro approach recover the ground truth? Does~\eqref{prog:E1} have spurious local minima? How does the existence of spurious local minima depend on the topology of the network? 
All those problems reduce to the analysis of the optimization landscape of $E(\btheta)$ in~\eqref{def:E} under different network topologies. 

The answers to these aforementioned questions are closely related to the robustness of algorithms against~\emph{monotone adversaries}, an increasingly important topic in algorithm design in theoretic computer science. 
For simplicity, we assume no noise in the measurements, i.e., $w_{ij}=0.$ Suppose we collect more and more noiseless observations. It is not hard to imagine that more noiseless data help us to recover the underlying signal $\bg$. Moreover, it can be easily proven that the SDP relaxation works perfectly under this scenario. On the other hand, the Burer-Monteiro approach exhibits satisfactory performance in practice and enjoys much higher efficiency than the SDP relaxation does. However, it is unclear whether the Burer-Monteiro approach would inherit the robustness to such seemingly helpful ``noise" from the SDP relaxation. This type of ``noise" is often referred to as a \emph{monotone adversary}. 
Note that more observations mean adding more edges to the current network and thus we propose the following question:
\begin{quote}
{\em Does adding more edges to the network improve the optimization landscape or worsen it by creating spurious local optima?}
\end{quote}
We defer a more detailed discussion regarding monotone adversaries to Section~\ref{ss:mono}.

\subsection{The Kuramoto model and synchronization on complex networks }
The synchronization problem of coupled oscillators has a long history. It was originally proposed and studied by Christiaan Huygens in 1665 when investigating the behavior of two pendulum clocks mounted side by side on the same support. He observed that the two oscillators would always end up swinging in exactly opposite directions, independent of their respective motion~\cite{RONA16,ADKM08}. 

Since then, the synchronization of coupled oscillators has fascinated the scientific community, and remarkable progress has been made regarding this topic.
One of the breakthroughs was made by Kuramoto~\cite{Kura75,Kura84} who came up with a mathematically tractable model to describe the synchronization behavior for a large set of coupled oscillators. The general Kuramoto model assumes $n$ oscillators which interact with one another based on sinusoidal coupling, i.e., 
\begin{equation}\label{eq:kuramoto}
\frac{\diff \theta_i}{\diff t} = \omega_i - \sum_{j=1}^n a_{ij}\sin(\theta_i - \theta_j), \quad 1\leq i\leq n
\end{equation}
where each $\theta_i\in[0,2\pi)$ is a function of time $t$. Here $\omega_i$ refers to the natural frequency of the $i$-th oscillator, and $a_{ij}$ stands for the strength of the mutual coupling. 
The original work by Kuramoto~\cite{Kura75} required the weights $\{a_{ij}\}$ to be identical and showed the oscillators $\{\theta_i\}_{i=1}^n$ would synchronize if the mutual interactions are stronger than the effect of natural frequencies $\{\omega_i\}$ as discussed in~\cite{Kura75,Kura84}. In particular, the local asymptotic stability of~\eqref{eq:kuramoto} with general network topologies and natural frequencies is first studied in~\cite{JMB04}.
It is also worth noting that the Kuramoto model appears in quite many applications such as electric power networks, neuroscience, chemical oscillations, spin glasses, see~\cite{ABVRS05,ADKM08,DB14,DCB13,RPJK16} and the references therein for more details.

\begin{figure}[h!]
\centering
\includegraphics[width=50mm]{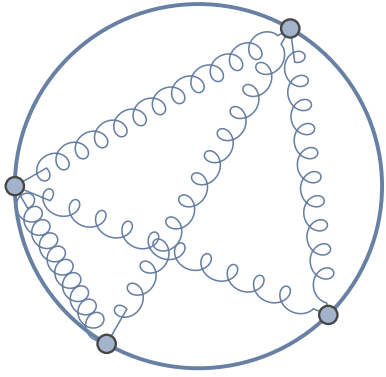}
\caption{Illustration of coupled oscillators.}
\label{fig:osc}
\end{figure}

The Kuramoto model was later studied on complex networks~\cite{JMB04,DCB13,ABVRS05,RPJK16} where $a_{ij} \geq 0$ describes how much the $i$-th and the $j$-th oscillator are coupled. In particular, if $\BA$ is an adjacency matrix, it means two oscillators $i$ and $j$ are coupled if $a_{ij}=1$, see Figure~\ref{fig:osc}.

Note that the synchronous state is highly related to the stable equilibrium of continuous time dynamical system~\eqref{eq:kuramoto}. On the other hand,~\cite{DCB13} points out the coupled oscillators model is a system of particles which tends to minimize the following energy function,
\[
E_{\omega}(\btheta) = \sum_{i,j}a_{ij} (1-\cos(\theta_i - \theta_j)) - \sum_{i=1}^n \omega_i \cdot \theta_i
\]
where the two terms above model the particle interaction and the external force respectively. If we consider a standard Cauchy problem associated to $E_{\omega}(\btheta)$ by setting the velocity of each oscillator to the gradient of $-E_{\omega}(\btheta)$, i.e.,
\[
\frac{\diff \theta_i(t)}{\diff t} = -\frac{\pa E_{\omega}(\btheta)}{\pa \theta_i}, \quad 1\leq i\leq n, \quad \theta_i = \theta_i(0)
\]
or equivalently,
\begin{equation}\label{def:cauchy}
\frac{\diff \btheta(t)}{\diff t} = -\nabla E_{\omega}(\btheta),\quad \btheta = \btheta(0)
\end{equation}
then the evolution is always moving in the direction where the energy $E_{\omega}(\btheta)$ decreases maximally, which is referred to as gradient flows.  Discretizing~\eqref{def:cauchy} by an Euler scheme corresponds to gradient descent applied to the energy function $E_{\omega}(\theta).$

 It is natural to ask the behavior of $\btheta$ as time evolves. In fact each stable equilibrium corresponds to a local minimizer of energy function $E_{\omega}(\btheta)$ whose existence is the main focus of this work. From this point on, we consider a simplified version, the homogeneous Kuramoto model~\cite{Kura75,Strogatz00,Taylor12}, where the natural frequencies $\{\omega_i\}_{i=1}^n$ are 0. Under this assumption, $E_{\omega}(\btheta)$ reduces to $E(\btheta)$ and thus it suffices to understand the local minima of $E(\btheta)$ in order to characterize the synchronization of the homogeneous Kuramoto model on networks.

%From an optimization perspective, the discretization of continuous differential equations~\eqref{eq:kuramoto} via Euler method immediately gives the gradient descent applied to $E_{\omega}(\btheta)$. Note that we are interested in whether all the oscillators will synchronize or converge to some equilibria of this nonlinear dynamic system. It suffices to understand the local minima of the energy function $E(\btheta)$ and how it depends on the topology of the network. 

\subsection{Related work and our contributions}
The landscape of nonconvex loss functions has drawn extensive attention recently. Examples range from signal processing to statistics and machine learning, including phase retrieval~\cite{SunQW15,SunQW16}, matrix completion~\cite{GeLM16}, blind deconvolution~\cite{ZKW18}, community detection and synchronization~\cite{BBV16}, kernel PCA~\cite{CL17}, low-rank matrix sensing problem~\cite{LZT18}, tensor decomposition~\cite{GM17}, dictionary learning~\cite{SQW17}, and neural networks~\cite{SJL17}. Among all those existing works, ours shares more commonalities with group synchronization and matrix completion. 

Synchronization problems, which aim to recover group elements $g_i$ from its relative alignment $g_ig_j^{-1}$ (or noisy alignment), are considered on different groups such as
phase synchronization~\cite{BBS17,ZB18} (corresponding to the group $U(1)$), $\mathbb{Z}_2$ synchronization~\cite{AbbeBBS14,BBV16}, joint alignment from pairwise differences~\cite{ChenC18} (corresponding to the cyclic group, i.e., $\mathbb{Z}/n\mathbb{Z}$), and general group synchronization problems~\cite{PWBM16,AMMS17}. 
Many algorithms, some based on convex and nonconvex optimization, are proposed to tackle these problems and proven to work well on group elements retrieval. In these aforementioned works, we are mostly influenced by $\mathbb{Z}_2$ and phase synchronization~\cite{BBS17,ZB18,AbbeBBS14,BBV16,ChenC18}, especially~\cite{BBV16}.

The convex relaxation approach for $\mathbb{Z}_2$ and phase synchronization is initially studied in~\cite{BBS17,AbbeBBS14}. The proposed method is shown to be tight, i.e., recovery group elements exactly under mild conditions. Later,~\cite{B16,ChenC18,ZB18} show that one can retrieve group elements via a two-step nonconvex optimization procedure, i.e., a carefully chosen initialization followed by (projected) power iteration. Compared with convex relaxation, iterative methods based on gradient descent and power methods achieve much higher efficiency while still enjoying rigorous theoretic guarantees. However, the discussion on the landscape of corresponding cost function is not covered~\cite{ZB18,ChenC18}. 
In~\cite{BBV16}, the authors studied the Burer-Monteiro method applied to $\mathbb{Z}_2$ synchronization and obtained an objective function in the form of $E(\btheta)$ (modulo a proper transformation). The resulting function is shown to enjoy a benign optimization landscape by taking advantage of the randomness in the weight matrix. The work provides a strategy for analyzing the landscape of energy function via an effective perturbation argument. Unlike~\cite{BBV16}, our work concerns the properties of landscapes of the energy function associated to an unweighted network and how the landscape depends on the network topology, which is a vastly different setting.

Our work is also closely related to matrix completion~\cite{GeLM16,CandesR09,SunL16} where one wants to recover a low-rank matrix from partial observations. The landscape of least squares loss function associated with matrix completion is proven to be benign in~\cite{GeLM16}; fast and efficient nonconvex algorithms are developed in~\cite{KOM09,SunL16} to solve matrix completion problem. The main distinction of our work lies in the fact that the ground truth signals have entrywise unit modulus. In fact, the additional unit modulus constraints lead to a more complicated landscape since they potentially create many critical points and local optima, thus making the analysis different. 

\vskip0.25cm
Another direction of relevant research comes from dynamical systems and synchronization on complex networks~\cite{Strogatz01}. One prominent example is the Kuramoto model~\cite{Kura75} which models the collective behavior of a large population of oscillators which interact with one another via sinusoidal coupling~\cite{DB14,DCB13,RPJK16,Strogatz00,WileySG06,JMB04}. 
We gladly acknowledge being influenced by the excellent reviews in this field~\cite{DCB13,DB14,RPJK16}. For the Kuramoto model, the core problem is to find and analyze the equilibria which yields solving a nonlinear system of equations~\cite{MDDH15} and the stable equilibria correspond to the local minima of the energy function. 
The work~\cite{Strogatz00} introduces a class of illuminating examples of networks called Wiley-Strogatz-Girvan networks and their corresponding energy functions of the coupled oscillators yield spurious local minima. These examples indicate that there may exist local minima even if the graph is well connected. The work by Taylor~\cite{Taylor12} on homogeneous Kuramoto model suggests exploring the existence of local minima for deterministic dense graphs and plays a role in guiding our theoretic analysis. Several recent works~\cite{CMM18,Ichi04,LLYMG16,MT15} have studied the Kuramoto model on random graphs via mean field analysis; they do not overlap with the current work since we consider the landscape of energy function with finite $n$ instead of its continuum limit and asymptotic behavior when $n\rightarrow \infty$. Another distinct feature of our work is on the analysis of global landscape instead of the local optimization landscape restricted on the cohesive phases, i.e., $\{\btheta: |\theta_i - \theta_j| < \pi/2\}$, as seen in~\cite{DB14,DCB13}.

\vskip0.25cm
The contribution of this work is multifold. We give a precise characterization of the connection between the function landscape and the associated network topology. We consider two important types of graphs: dense deterministic graphs and Erd\H{o}s-R\'enyi random graphs, and prove the resulting function landscape enjoys no other local minima except the synchronized state $\btheta = \btheta_0$. In other words, we prove that the homogeneous Kuramoto model has only one stable synchronized solution and the basin of synchronization is the whole phase space with high probability if the underlying graph is a \emph{finite} Erd\H{o}s-R\'enyi graph. 
We also prove that if each node has at least $(3-\sqrt{2})(n-1)/2 \approx 0.7929(n-1)$ neighbors, no spurious local minima exist, which improves the state-of-the-art bound $\mu\geq0.9375(n-1)$ in~\cite{Taylor12} by Taylor.
Moreover, the theoretic analysis provides explicit examples under which the Burer-Monteiro method is not robust against monotone adversaries. The techniques involved in this manuscript share very little similarities with those used in the existing literature of the Kuramoto model on complex networks. Thus our work likely offers new inspirations to solve the Kuramoto models on other types of random graphs.
In conclusion, our work not only provides solutions to a few important questions about synchronization problem on networks but also motivates a series of open problems.

\vskip0.25cm

\subsection{Organization of our paper}
This paper is organized as follows. In Section~\ref{s:prelim} we go through the basics of $E(\btheta)$ and introduce a few examples. Our main theorems and open problems are presented in Section~\ref{s:main}. Section~\ref{s:numerics} is devoted to numerical experiments that complement the theoretical analysis.  Finally, the proofs of our results can be found in Section~\ref{s:proofs}.

\subsection{Notation}
We introduce notations which will be used throughout the paper. Matrices are denoted in boldface such as
$\BZ$; vectors are denoted by boldface lower case letters, e.g.~$\bz.$
The individual entries of a matrix or a vector are denoted in normal font such as $z_{ij}$ or
$z_i.$
For any matrix $\BZ$, $\|\BZ\|$
denotes its operator norm, and $\|\BZ\|_F$ denotes its the Frobenius norm. For any vector $\bz$, $\|\bz\|$ denotes its Euclidean norm; $\|\bz\|_{\infty}$ stands for the $\ell_{\infty}$ norm. For both
matrices and vectors, $\BZ^{\top}$ and $\bz^{\top}$ stand for the transpose of $\BZ$ and $\bz$ respectively. We equip the matrix space $\RR^{K\times N}$ with the inner
product defined by $\lag \BU, \BV\rag \coloneqq\Tr(\BU^{\top}\BV).$ A special case is the inner product of two vectors, i.e.,
$\lag \bu, \bv\rag = \Tr(\bu^{\top}\bv) = \bu^{\top}\bv.$ For a given vector $\bv$, $\diag(\bv)$ represents the diagonal
matrix whose diagonal entries are given by the vector $\bv$; and $\ddiag(\BZ)$ denotes a diagonal matrix whose diagonal entries are the same as those of $\BZ$.
$\I_n$ and $\bone_n$ always denote the $n\times n$ identity matrix and a column vector of ``$1$" in $\RR^n$ respectively; $\BJ_n$ represents $\bone_n\bone_n^\top$; $\be_i\in\RR^n$ denotes a vector with a 1 in the $i$-th coordinate and 0's elsewhere.
For any $\BU$ and $\BV$ in $\RR^{K\times N}$, their Hadamard product is denoted as $\BU\circ\BV$. For any $\BZ\in\RR^{N\times N}$, $\BZ\succeq 0$ if $\BZ$ is symmetric and positive semidefinite.

\section{Preliminaries}\label{s:prelim}
This section is devoted to providing basic properties of $E(\btheta)$ and some concrete examples. The examples illustrate the connection between the energy landscape and network topology, motivating our theoretical and numerical investigations. Moreover, they address the robustness issue of the Burer-Monteiro approach against monotone adversary.

\subsection{Problem setup}
We start with $E(\btheta)$ and write it into a more compact form. 
Let 
\begin{equation}\label{def:Q}
\BQ := [\bx~\by]\in\RR^{n\times 2}
\end{equation}
 where $\bx \coloneqq [\cos \theta_1, \cdots, \cos\theta_n]^{\top}\in\RR^n$ and $\by \coloneqq [\sin\theta_1, \cdots, \sin\theta_n]^{\top}\in\RR^n$. Then we have $(\BQ\BQ^{\top})_{ij} = x_ix_j + y_iy_j= \cos(\theta_i - \theta_j)$ where $x_i$ and $y_i$ are the $i$-th entry of $\bx$ and $\by$ respectively.
Hence $E(\btheta)$ can be recast in terms of $\BQ$, 
\[
E(\btheta) = \frac{1}{2} \lag \BA, \BJ_n  - \BQ\BQ^{\top}\rag.
\]

To study the behavior of local minima, it is essential to obtain the first and second order necessary condition. 
Both conditions are easy to obtain via direct computation:
\begin{equation}\label{cond:1st}
(\nabla E(\btheta))_i = \sum_{j=1}^n a_{ij}\sin(\theta_i - \theta_j) = 0.
\end{equation}
Thanks to a simple trigonometric identity $\sin(\theta_i - \theta_j) = \sin\theta_i\cos \theta_j - \cos\theta_i\sin\theta_j$, we have 
\[
\BA\bx \circ \by = \BA\by\circ \bx.
\]

Now we proceed to compute the Hessian matrix of $E(\btheta)$,
\begin{equation}\label{eq:2nd}
(\nabla^2 E(\btheta))_{ij} =
\begin{dcases}  
\frac{\pa^2 E}{\pa \theta_i\pa\theta_j} = -a_{ij}\cos(\theta_i - \theta_j), & \forall i\neq j, \\
\frac{\pa^2 E}{\pa \theta_i^2} = \sum_{j\neq i}a_{ij}\cos(\theta_i - \theta_j), & \forall i = j.
\end{dcases}
\end{equation}

In fact, the first and second order derivatives above are exactly the Riemannian gradient and Hessian of~\eqref{prog:bmq} with a proper trigonometric transformation since one can think of~\eqref{prog:bmq} as an optimization problem on a $n$-dimensional torus. Interested readers may refer to ~\cite{journee2010low,AMS09,BBV16} for more details about how to solve optimization problems on manifold.

The Hessian matrix $\nabla^2 E(\btheta)$ can be regarded as the graph Laplacian associated to the weight matrix $\{ a_{ij}(\cos(\theta_i - \theta_j))\}_{1\leq i,j\leq n}$ if we allow negative weights. As a result, the constant vector $\bone_n$ is in the null space of $\nabla^2 E(\btheta).$
In particular, when $\btheta$ is a local minimizer of $E(\btheta)$, we have $\nabla^2 E(\btheta)\succeq 0$. This necessary condition has the following equivalent form,
\begin{equation}\label{cond:2nd} 
\nabla^2 E(\btheta) = \ddiag(\BA\BQ\BQ^{\top}) - \BA\circ \BQ\BQ^{\top} \succeq 0.
\end{equation}

Following from~\eqref{cond:2nd}, we know that if $\{\theta_i\}_{i=1}^n$ simultaneously stay inside a single quadrant, then $\nabla^2 E(\btheta)\succeq 0$ and $E(\btheta)$ is convex.
This is because $\nabla^2 E(\btheta)$ is the graph Laplacian associated to the weight given by $a_{ij}\cos(\theta_i - \theta_j) \geq 0$ if $|\theta_i - \theta_j| < \pi/2$. In particular, if the graph is connected, $E(\btheta)$ is strictly convex within $\{\btheta: |\theta_i - \theta_j| < \pi/2\}.$ 
 Therefore, local convexity holds in a neighborhood around the global minimizer $\btheta_0 = \bzero$. 
 
However, the global landscape is more complex. In fact, $E(\btheta)$ has at least $2^n$ distinct critical points in the form of $\{ \btheta\in\RR^n : \theta_i = 0 \text{ or } \pi\}$ up to a global rotation.  Moreover, other critical points and local optima may coexist while not be obtained analytically. This adds difficulties to the landscape analysis of $E(\btheta)$.

\subsection{Examples}
We introduce a few examples to illustrate the complex relationship between the existence of spurious local minima of $E(\btheta)$ and the network represented by $\BA$. First, it is natural to ask whether there are no spurious local minima at all if $\BA$ is well connected. Note that the Hessian matrix at $\btheta_0 = \bzero$ satisfies
\[
\nabla^2 E(\btheta_0) = \ddiag(\BA \BJ_n) - \BA = \diag(\BA\bone_n) - \BA
\]
which corresponds exactly to the graph Laplacian associated to the adjacency matrix $\BA.$ When adding more edges to the graph, i.e., putting more nonzero entries into $\BA$, the second smallest eigenvalue of $\nabla^2E(\btheta)$ would increase and thus strengthen the local convexity at $\btheta_0$. However, there are examples indicating that the local curvature at $\btheta_0$ {\em fails} to determine the properties of the global landscape we are interested in. We defer the technical details behind these examples to Section~\ref{ss:proofex}.

\paragraph{The path graph of $n$ nodes.}
We start with a simple example where the graph is a path of $n$ vertices, see Figure~\ref{fig:graph1}. For such a path graph, we claim there are exactly $2^n$ critical points represented by $\{\btheta \in\RR^n: \theta_i = 0 \text{ or } \pi \}$ but the
only local minimizer is $\btheta = \btheta_0$. More precisely, the adjacency matrix of a path satisfies 
\[
a_{ij} = 
\begin{cases}
1, & \text{if }|j-i|=1,\\
0, &  \text{otherwise}.
\end{cases}
\]
By substituting $a_{ij}$ into~\eqref{cond:1st}, solving for critical points, and checking the second order necessary condition, we arrive at our claim. In fact, if the graph is a connected tree (i.e., a connected graph without loop), one can draw a similar conclusion. 

\begin{figure}[h!]
\centering
\includegraphics[width=45mm]{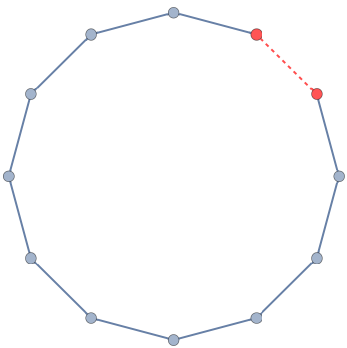}
\caption{A path graph with $n$ nodes and an $n$-cycle. While the energy function associated to a path graph has the unique minimum $\btheta_0$, it has at least one spurious local minimum if the underlying network is an $n$-cycle. }
\label{fig:graph1}
\end{figure}

\paragraph{Wiley-Strogatz-Girvan networks.}
The Wiley-Strogatz-Girvan networks~\cite{WileySG06} are constructed via linking each node on a cycle with its $k$-nearest neighbors, see Figure~\ref{fig:WSG}. Obviously, the larger $k$ is, the better the graph is connected. On the other hand, for such a graph, at least one local minimum besides $\btheta_0$ can be constructed explicitly if $k \leq 0.34n$, i.e., each node has at most $0.68n$ neighbors.
More precisely, we can verify that $\btheta = 2\pi n^{-1} [0, 1, \cdots,  n-1]^{\top}$ is a local minimizer (a.k.a. the uniformly twisted states, see~\cite{WileySG06}), and we provide a short derivation via the Discrete Fourier Transform in Section~\ref{ss:proofex}.
\begin{figure}[h!]
\centering
\begin{minipage}{0.48\textwidth}
\includegraphics[width=60mm]{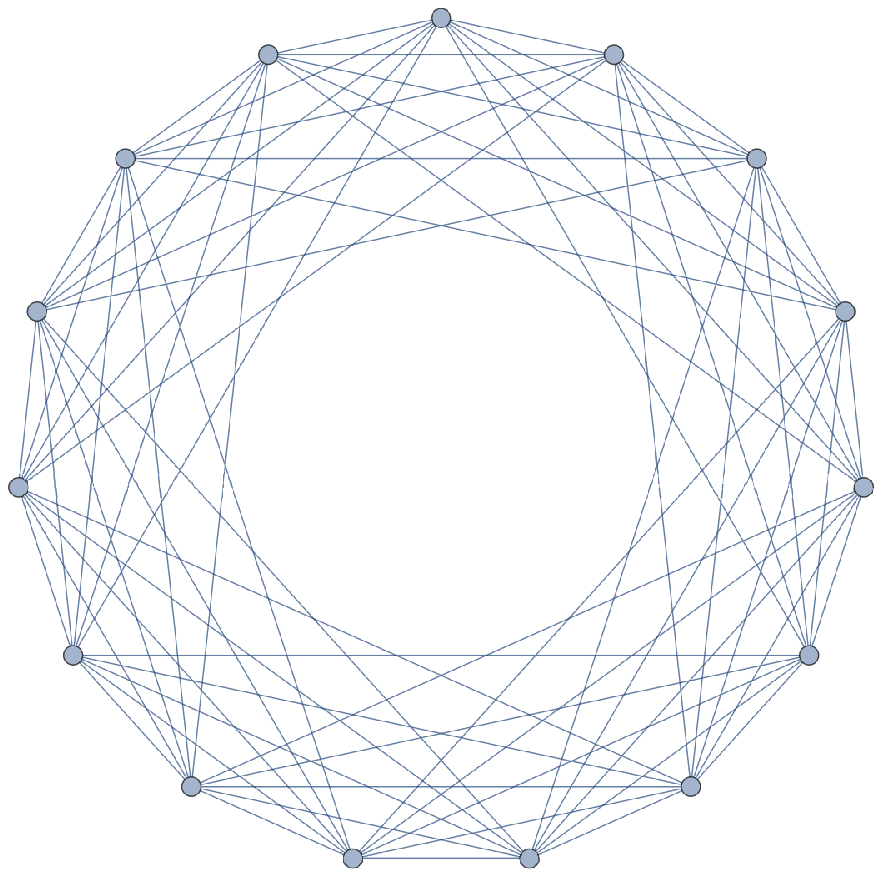}
\end{minipage}
\hfill
\begin{minipage}{0.48\textwidth}
\includegraphics[width=60mm]{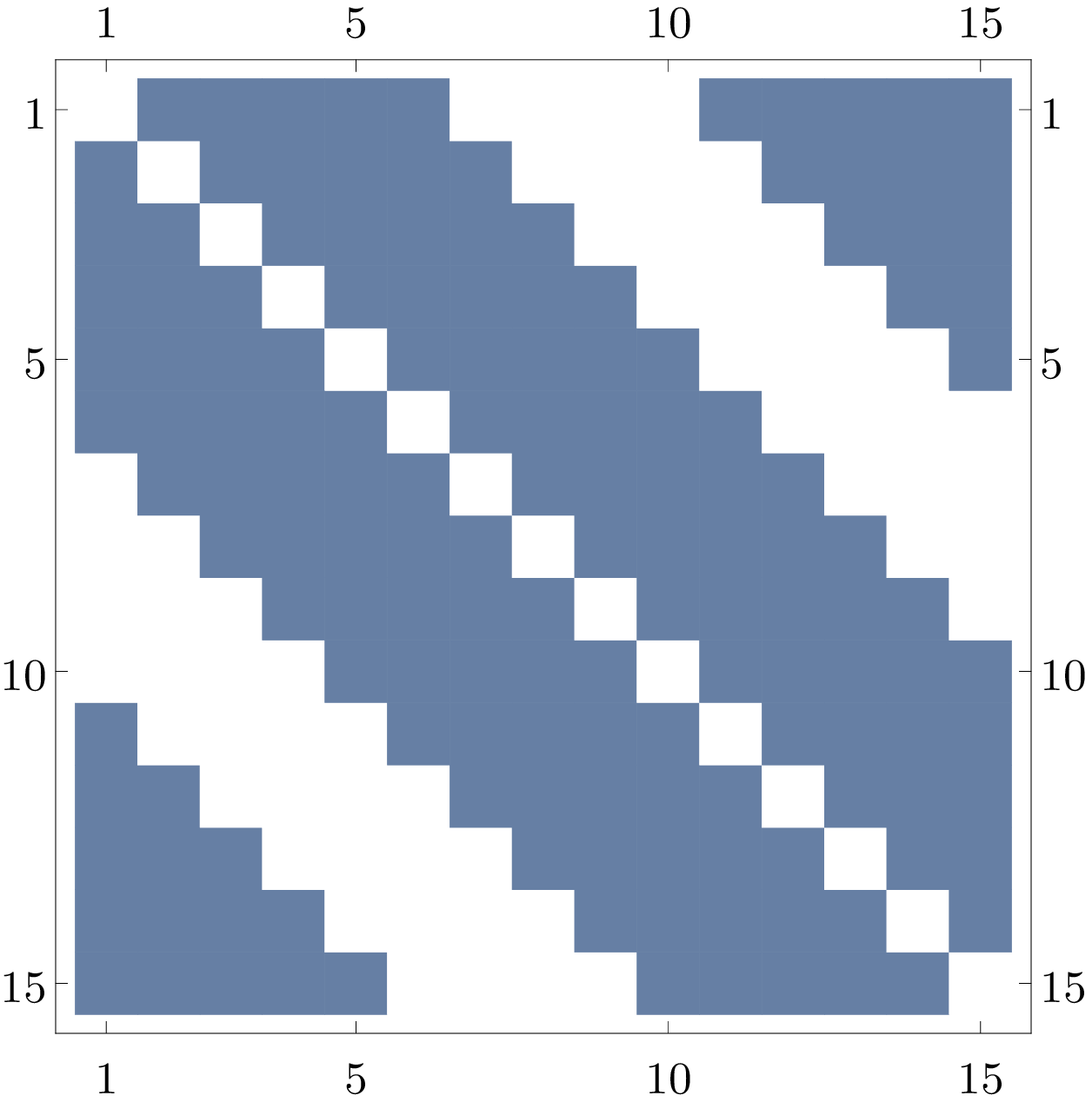}
\end{minipage}
\caption{Wiley-Strogatz-Girvan networks~\cite{WileySG06} and its adjacency matrix.}
\label{fig:WSG}
\end{figure}

 %In other words, even if each node of the graph has the same degree and the degree is less than 0.68$n$, the corresponding Kuramoto model does not enjoy global synchronization.
\begin{figure}[h!]
\centering
\includegraphics[width=80mm]{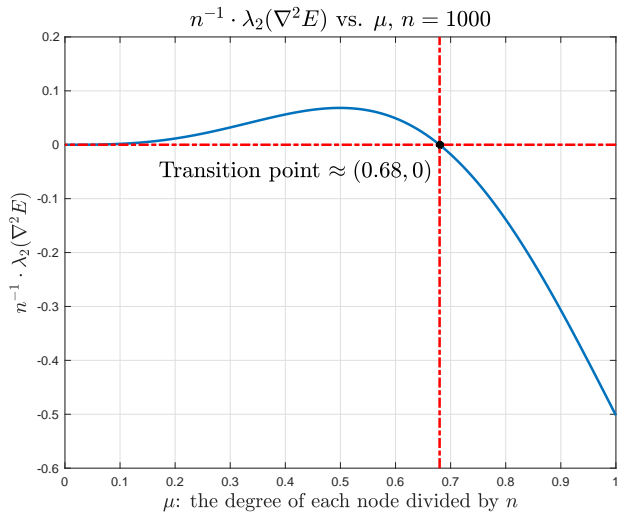}
\caption{The rescaled second smallest eigenvalue of Hessian matrix $(n-1)^{-1}\lambda_2(\nabla^2 E(\btheta))$ with respect to $\mu$ at the twisted state for Wiley-Strogatz-Girvan networks.}
\label{fig:lambda2}
\end{figure}

\vskip0.25cm
In conclusion, one can always find an explicit graph whose nodes are of degree as large as $0.68n$ such that at least one spurious local minimum exists. It is worth noting that if the underlying graph is complete, there is no local minimum~\cite{Taylor12}. Therefore, we want to find out if there exists a critical value for the degree which prevents the existence of spurious local minima.

\subsection{Monotone adversary}\label{ss:mono}
Now we return to the Burer-Monteiro approach when applied to $\mathbb{Z}_2$ synchronization and look into its robustness against the~\emph{monotone adversary}. Here the monotone adversary means adding noise that seemingly helps us to solve the problem by providing more information. The motivation behind this is to design efficient and reliable algorithms to tackle difficult optimization problems~\emph{without} implicitly taking advantage of the statistical properties of noise. Therefore, the study of monotone adversaries becomes an active research topic in algorithm designs in theoretic computer science~\cite{FK01} and has been discussed in several contexts such as community detection~\cite{MPW16} and matrix completion~\cite{CG18}. 

For $\mathbb{Z}_2$ synchronization, let us investigate the monotone adversary for a simplified model in~\eqref{prog:E1} with $\BW=0$ and study the landscape of the corresponding objective function associated to a given graph $\BA$. In this case, the objective function is exactly equal to $E(\btheta)$ and the monotone adversary is equivalent to adding more edges to the network. In other words, we have more observed data by doing that, and it should be more helpful for the retrieval of underlying information. Note that SDP relaxation is robust to this type of adversary as more information is provided. However, it is unclear whether nonconvex optimization methods such as Burer-Monteiro approach would enjoy this type of robustness property. 

Unfortunately, this is not the case in general as we have already seen explicit examples where adding more edges may {\em worsen} the landscape of $E(\btheta)$ by introducing undesired spurious local minima. Let $\BA$ be the adjacency matrix of the  $n$-path graph, and there is no local minimum at all. Suppose we add one more edge to make it into an $n$-cycle graph. The resulting graph becomes an example of the Wiley-Strogatz-Girvan networks and it is known that the twisted state leads to a spurious local minimum.

\section{Main results and open problems}\label{s:main}
The connection between the optimization landscape and general network topology is far from being well understood. 
From now on, we consider two important types of graphs and the optimization landscape of their corresponding energy function $E(\btheta)$. 
\begin{enumerate}[1.]
\item {\bf Deterministic dense graphs.}~~~Concrete examples from the Wiley-Strogatz-Girvan networks are constructed showing the existence of spurious local minima even if the graph is highly connected. On the other hand, we note that if the underlying graph is complete, i.e., $a_{ij} = 1$, no spurious local minimum exists. Therefore, one may immediately wonder how the optimization landscape looks like if the graph is sufficiently dense, i.e., each vertex has at least $\mu (n-1)$ neighbors for $\mu$ close to 1. We refer to this as the worst case scenario. 

\item  {\bf Erd\H{o}s-R\'enyi random graphs.}~~~While the deterministic dense graphs correspond to the worst case analysis, is it possible that the landscape of $E(\btheta)$ is benign on average? For this purpose, we consider the underlying graph as Erd\H{o}s-R\'enyi random graphs $G(n,p)$ whose adjacency matrix $\BA\in\RR^{n\times n}$ satisfies $a_{ij} = a_{ji}$ and 
\begin{equation}\label{eq:ER}
a_{ij} = 
\begin{cases}
1, & \text{ with probability } p, \\
0, & \text{ with probability } 1 - p,
\end{cases}
\end{equation}
for certain constant $p\in[0,1]$. This is called the average case scenario. 
\end{enumerate}

The first theorem states that if the graph is sufficiently dense, then the corresponding energy function has a benign optimization landscape, i.e., all local minima are global and attained at $\btheta = \btheta_0.$
\begin{theorem}\label{thm:main1}
Given a graph with $n$ vertices. Suppose the degree of each vertex is at least $\mu (n-1)$ with 
\[
\mu \geq \frac{3-\sqrt{2}}{2}, 
\]
then the only local minimum of the energy function $E(\btheta)$ is attained when
\[
\btheta = \btheta_0,
\]
which is unique modulo a global translation.
\end{theorem}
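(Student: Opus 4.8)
The plan is to characterize local minimizers through the first- and second-order optimality conditions and to show that, once the minimum degree crosses the stated threshold, the only configuration satisfying both is the synchronized one. Recall from the preliminaries that a local minimizer $\btheta$ is a critical point (so $\sum_j a_{ij}\sin(\theta_i-\theta_j)=0$ for every $i$) at which the Hessian is positive semidefinite, and that the Hessian acts as a weighted graph Laplacian, giving for every $\bv\in\RR^n$
\[
\bv^\top \nabla^2 E(\btheta)\,\bv \;=\; \sum_{i<j} a_{ij}\cos(\theta_i-\theta_j)\,(v_i-v_j)^2 .
\]
I would extract two consequences of $\nabla^2E(\btheta)\succeq 0$: first, testing against $\be_i$ shows every weighted degree $r_i:=\sum_j a_{ij}\cos(\theta_i-\theta_j)$ is nonnegative; second, the whole one-parameter family $\bv=\cos\psi\,\bx+\sin\psi\,\by$ (that is, $v_k=\cos(\theta_k-\psi)$) must yield a nonnegative form for every $\psi$. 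The goal then reduces to proving that any critical point with $r_i\ge 0$ for all $i$ equals $\btheta_0$ modulo translation.

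The backbone is an order-parameter reformulation. Writing $z_k=e^{\mi\theta_k}$ and $S=\sum_k z_k$, the first-order condition is equivalent to $\sum_j a_{ij}z_j = r_i z_i$. I would then split the adjacency matrix as $\BA=\BJ_n-\BI_n-\BB$, where $\BB$ records the missing edges; the minimum-degree hypothesis means each row of $\BB$ has at most $\delta:=(1-\mu)(n-1)$ ones. This turns the first-order condition into
\[
(r_i+1)\,z_i \;=\; S-\xi_i,\qquad \xi_i:=\sum_{j\,\not\sim\, i} z_j,\qquad |\xi_i|\le \delta .
\]
Since $r_i\ge 0$, the factor $r_i+1=|S-\xi_i|\ge 1$ is positive, so each $z_i$ is \emph{aligned} (never anti-aligned) with the perturbed order parameter $S-\xi_i$. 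This already rules out the antipodal critical points that are the obstruction on the complete graph, and it reduces everything to controlling how much the directions of $S-\xi_1,\dots,S-\xi_n$ can disagree.

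The argument then splits according to the size of $|S|$. In the \emph{concentrated regime} $|S|>\sqrt2\,\delta$, I would show that $\langle S-\xi_i,\,S-\xi_j\rangle>0$ for every pair: minimizing this inner product over $|\xi_i|,|\xi_j|\le\delta$ is a two-variable trigonometric optimization whose worst case, $\tfrac12|S|^2-\delta^2$, is positive exactly when $|S|>\sqrt2\,\delta$ --- this is where the $\sqrt2$ (hence the constant $(3-\sqrt2)/2$) enters. Positivity of all these inner products forces $\cos(\theta_i-\theta_j)>0$, i.e. all pairwise angles lie in $(-\pi/2,\pi/2)$; choosing lifts of the $\theta_k$ inside a common arc of length $<\pi/2$ places $\btheta$ in the region on which $E$ is strictly convex (the graph is connected since $\mu(n-1)>(n-1)/2$), and strict convexity leaves $\btheta_0$ as the only critical point.

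The remaining work, and the main obstacle, is the \emph{diffuse regime} $|S|\le\sqrt2\,\delta$, where the order parameter is too small to guarantee concentration and one must instead exhibit a direction of negative curvature, contradicting $\nabla^2E(\btheta)\succeq 0$. Using the test family $v_k=\cos(\theta_k-\psi)$ and minimizing the resulting form over $\psi$, I would reduce the existence of negative curvature to the clean criterion
\[
\Bigl|\,\sum_{i<j} w_{ij}\,\Bigr| \;<\; \Bigl|\,\sum_{i<j} w_{ij}\,e^{\mi(\theta_i+\theta_j)}\Bigr|,\qquad w_{ij}:=a_{ij}\cos(\theta_i-\theta_j)\bigl(1-\cos(\theta_i-\theta_j)\bigr),
\]
i.e. the phase-modulated edge sum must dominate the unmodulated one. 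Establishing this inequality uniformly over all critical configurations with $|S|\le\sqrt2\,\delta$ and at most $\delta$ missing neighbours per vertex is the crux: it requires trading the positive (acute-edge) contributions against the negative (obtuse-edge) ones, and it is precisely the step that pins down the threshold $\mu\ge(3-\sqrt2)/2$. I expect this balancing --- carried out through the decomposition $\BA=\BJ_n-\BI_n-\BB$ and careful bookkeeping of the missing-edge terms $\xi_i$ --- to be the hardest and most delicate part, with the two regimes meeting exactly at $|S|=\sqrt2\,\delta$.
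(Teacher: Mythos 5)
Your concentrated regime ($|S|>\sqrt{2}\,\delta$) is essentially sound and runs parallel to the paper's own argument: there, the first-order condition gives $\|r\|\,|\sin(\theta_i-\theta_r)|\le(1-\mu)(n-1)$, and a two-quadrant plus convexity-along-rays argument (the paper's key proposition) finishes. The genuine gap is the diffuse regime $|S|\le\sqrt{2}\,\delta$, which you explicitly leave as an unproven ``crux'': that regime is where the entire content of the theorem lives, and the negative-curvature criterion you propose is both unestablished and not quite the right form (for the family $v_k=\cos(\theta_k-\psi)$, minimizing over $\psi$ yields negative curvature iff $\sum_{i<j}w_{ij}<\bigl|\sum_{i<j}w_{ij}e^{\mi(\theta_i+\theta_j)}\bigr|$, with \emph{no} absolute value on the left-hand side). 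As written, the proof is incomplete precisely at the step where the constant $(3-\sqrt{2})/2$ must be earned.

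The paper closes this gap without any case split, using a tool you already have in hand: average your own test family over $\psi$, i.e.\ pair the Hessian with $\BQ\BQ^{\top}$. This gives $\lag\BA,\BQ\BQ^{\top}\rag\ge\lag\BA,\BQ\BQ^{\top}\circ\BQ\BQ^{\top}\rag$, equivalently $\sum_{i<j}w_{ij}\ge0$. Since $\BQ\BQ^{\top}$ has rank two and trace $n$, we get $\lag\BJ_n,\BQ\BQ^{\top}\circ\BQ\BQ^{\top}\rag=\|\BQ\BQ^{\top}\|_F^2\ge n^2/2$, and the missing-edge matrix $\BJ_n-\BI_n-\BA$ has at most $(1-\mu)(n-1)n$ nonzero entries, each multiplying a term of absolute value at most $2$; hence
\[
|S|^2=\lag\BJ_n,\BQ\BQ^{\top}\rag\ \ge\ \frac{n^2}{2}-2(1-\mu)(n-1)n\ =\ \Bigl(2\mu-\tfrac{3}{2}\Bigr)n^2+2(1-\mu)n .
\]
The inequality $2\mu-\tfrac{3}{2}\ge 2(1-\mu)^2$ holds exactly for $\mu\ge(3-\sqrt{2})/2$ (the roots of $\mu^2-3\mu+\tfrac{7}{4}$ are $(3\pm\sqrt{2})/2$), so for such $\mu$ every local minimizer satisfies $|S|^2>2\delta^2$ with $\delta=(1-\mu)(n-1)$: your diffuse regime is empty. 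Equivalently, any critical point with $|S|\le\sqrt{2}\,\delta$ already has $\sum_{i<j}w_{ij}<0$, i.e.\ the $\psi$-averaged (unmodulated) curvature is negative, so no delicate balancing against the phase-modulated sum is needed. With this lemma inserted in place of your diffuse-regime plan, the rest of your argument goes through and recovers the theorem.
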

The result above significantly improves on a previous result by Taylor~\cite{Taylor12} where the optimization landscape is proven benign if $\mu \geq 0.9395.$ Combined with the existing facts from the Wiley-Strogatz-Girvan networks~\cite{WileySG06}, we have a more complete characterization regarding the existence of local minima with respect to the degree of each vertex: the smallest possible $\mu$ is between $0.6809$ and $(3-\sqrt{2})/2\approx 0.7929$ such that no spurious local minima exists if all nodes have at least $\mu (n-1)$ neighbors. Based on all the results obtained so far, we propose the following open problem:
\begin{open}
Does there exist a critical constant $\mu_c$ such that
\begin{itemize}
\item if $\mu < \mu_c$, there are other local minima besides $\btheta = \btheta_0$,
\item if $\mu > \mu_c$, the only local minimum is achieved at $\btheta=\btheta_0$,~(modulo a global translation)
\end{itemize}
where each node has at least $\mu(n-1)$ neighbors?
\end{open}
%Based on Theorem~\ref{thm:main1} and our previous discussion, we know that if such a constant $\mu_c$ exists, $\mu_c$ must be somewhere between $0.6626$ and $(3-\sqrt{2})/2 \approx 0.7929$.

While the deterministic dense graphs yield the worst case analysis, we present an average case result with the help of randomness. 
\begin{theorem}\label{thm:main2}
Given an Erd\H{o}s-R\'enyi graph $G(n,p)$. Suppose $p = 32\gamma n^{-1/3}\log n$ with $\gamma\geq 1$,
then the only local minimum of the energy function $E(\btheta)$ is attained at
\[
\btheta = \btheta_0, 
\]
with probability at least 
\[
1 - 4\exp\left(n \left( \log (100n^{\frac{1}{3}}) - 2\gamma \log n\right)\right) - 10n^{-\gamma+1}.
\]
\end{theorem}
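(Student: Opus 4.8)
The plan is to reduce the global statement to a local one: I would show that, with the stated probability, \emph{every} second-order critical point $\btheta$ of $E$ (i.e.\ every $\btheta$ satisfying \eqref{cond:1st} and \eqref{cond:2nd}) has all of its angles confined to a common arc of length less than $\pi/2$. Once this is in hand, such a $\btheta$ lies in the region $\{\btheta:|\theta_i-\theta_j|<\pi/2\}$, which is convex and on which $E$ is strictly convex for the connected graph $G(n,p)$ (connected w.h.p.\ since $p\gg \log n/n$); the unique critical point there is $\btheta=\btheta_0$, giving the claim modulo global translation. The observation that makes this feasible at such a low density is that $G(n,p)$ is \emph{spectrally} a small relative perturbation of the complete graph: writing $\BA=\E\BA+\BR$ with $\E\BA=p(\BJ_n-\I_n)$ and $\BR=\BA-\E\BA$, random matrix theory gives $\|\BR\|\lesssim\sqrt{np}\asymp n^{1/3}\sqrt{\log n}$ while $\|\E\BA\|\asymp np\asymp n^{2/3}\log n$, so $\|\BR\|/\|\E\BA\|\to 0$. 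This lets me run a dense-graph-style argument with the fluctuation $\BR$ treated as controllable noise, even though the degree $\approx np$ is far below the $\mu(n-1)$ threshold of Theorem~\ref{thm:main1}.

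Two kinds of randomness control enter. First, concentration of the degrees $d_i$ about $np$ together with the spectral bound $\|\BR\|\lesssim\sqrt{np}$; these are standard (Bernstein for degrees, matrix concentration for $\|\BR\|$) and account for the polynomial failure term $10n^{-\gamma+1}$. Second, a uniform-over-all-configurations estimate, which accounts for the term $4\exp\!\big(n(\log(100n^{1/3})-2\gamma\log n)\big)$. The lever is the second-order condition \eqref{cond:2nd}: through \eqref{eq:2nd} it says that for every $\bv\in\RR^n$, $\tfrac12\sum_{i,j}a_{ij}\cos(\theta_i-\theta_j)(v_i-v_j)^2\ge 0$, and specializing $\bv$ to $\pm1$ indicators yields, for every subset $S$, the signed-cut inequality $\sum_{i\in S,\,j\notin S}a_{ij}\cos(\theta_i-\theta_j)\ge 0$. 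The plan is to show that a spread-out configuration forces some test direction to violate this.

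Concretely, I would first use \eqref{cond:1st} in the form $(\diag(\br)-\BA)\BQ=0$, where $\br=\ddiag(\BA\BQ\BQ^\top)\bone$, i.e.\ $\BA\BQ=\diag(\br)\BQ$. Substituting $\BA=\E\BA+\BR$ and using $\E\BA\,\BQ=np\,\bone\bar\br^\top+O(p\,\bone\bar\br^\top/n)$ with the order parameter $\bar\br=\tfrac1n\BQ^\top\bone$, the spectral bound on $\BR$ forces each row of $\BQ$ to align, up to a controlled error, with $\bar\br$, so the angles concentrate about a common direction. To upgrade this to the sharp arc bound I would test the second-order condition over a $\delta$-net of the configuration space (equivalently of the test directions) at scale $\delta\asymp n^{-1/3}$, of cardinality $(100n^{1/3})^{n}=\exp(n\log(100n^{1/3}))$. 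For each fixed net point the relevant functional is a sum of independent Bernoulli contributions and concentrates with failure probability $\exp(-2\gamma n\log n)$ by Bernstein; a union bound yields the $4\exp(\cdot)$ term, and a Lipschitz estimate transfers the conclusion from the net to all configurations. Together these show any $\btheta$ with spread $\ge\pi/2$ violates either the signed-cut inequality or the alignment estimate, hence is not a local minimizer, leaving only configurations with spread $<\pi/2$, which the convexity argument kills.

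The hard part is the genuinely nonconvex, effectively rank-$2$ regime: configurations whose angles spread beyond $\pi/2$ are exactly the candidate spurious minima (the twisted states on cycles live here), and the clean Burer--Monteiro certificate theory does not apply at $q=2$. Eliminating these at the sublinear density $p\asymp n^{-1/3}\log n$, rather than at the $\Theta(n)$ degrees of Theorem~\ref{thm:main1}, is the crux, and it requires the uniform concentration to be sharp enough that the per-direction tail $\exp(-2\gamma n\log n)$ beats the metric entropy $\exp(\tfrac n3\log n)$ of the net; this balance is precisely what pins the threshold at $n^{-1/3}$ up to logarithmic factors. A secondary difficulty I anticipate is the bootstrapping needed to push the merely ``moderate'' concentration produced by the first-order analysis down to the sharp $\pi/2$ arc required to enter the strictly convex basin.
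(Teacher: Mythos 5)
Your proposal reproduces the paper's overall architecture: the decomposition $\BA = p\BJ_n + \BDelta$ with spectral/row-wise control of $\BDelta$ (the $10n^{-\gamma+1}$ term), a union bound over a net of $[0,2\pi]^n$ at scale $n^{-1/3}$ with per-point Bernstein tails (the $4\exp\left(n(\log(100n^{1/3})-2\gamma\log n)\right)$ term), and a final confinement-plus-connectivity step; your entropy-versus-tail balance even identifies the $n^{-1/3}$ threshold for the same reason the paper's does. However, the central step is circular as written. You claim that the first-order condition $\BA\BQ=\diag(\br)\BQ$ together with $\|\BDelta\|\lesssim\sqrt{np\log n}$ "forces each row of $\BQ$ to align with $\bar\br=\frac{1}{n}\BQ^{\top}\bone_n$". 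This deduction needs two quantitative inputs you never establish: a lower bound $\|\bar\br\|\asymp 1$ (twisted-state-like configurations satisfy the first-order condition exactly with $\bar\br=0$, so no spectral bound on $\BDelta$ alone rules them out), and, more subtly, a bound $\|(\I_n-\bone_n\bone_n^{\top}/n)\BQ\|_F=O(n^{1/3})$ on the mass of $\BQ$ orthogonal to $\bone_n$. The second is indispensable: even granting $\|\bar\br\|\asymp 1$, the naive row estimate $\|\be_i^{\top}\BDelta\BQ\|\leq\|\be_i^{\top}\BDelta\|\,\|\BQ\|\lesssim\sqrt{np\log n}\cdot\sqrt{n}$ exceeds $np\|\bar\br\|$ at $p\asymp n^{-1/3}\log n$, so alignment fails; only after splitting $\be_i^{\top}\BDelta\BQ$ along and against $\bone_n$ and using the $O(n^{1/3})$ orthogonal-mass bound does the ratio drop below $1/\sqrt{2}$. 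Both inputs are exactly the content of the paper's Lemma~\ref{lem:Q1_low}: the second-order condition tested against $\BQ$ itself gives $\lag\BA,\BQ\BQ^{\top}-\BQ\BQ^{\top}\circ\BQ\BQ^{\top}\rag\geq 0$; the net/union-bound argument is applied not to cut functionals but to the two functionals $\lag\BA-p\BJ_n,\BQ\BQ^{\top}\rag$ and $\lag\BA-p\BJ_n,\BQ\BQ^{\top}\circ\BQ\BQ^{\top}\rag$ (a restricted isometry property), which with the rank-two inequality $\|\BQ\BQ^{\top}\|_F^2\geq n^2/2$ yields the crude bound $\|\BQ^{\top}\bone_n\|^2\geq\frac{n^2}{2}(1-2\delta)$; and a further bootstrap through the first-order condition (writing $\BQ=[\bx~\by]$ with $\bx\perp\by$ and bounding $\|\by\|$ via $\|\BDelta\|/p$) upgrades this to $n^2(1-3n^{-1/3})$. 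This lemma, not the alignment heuristic, is the missing idea.

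Your proposed substitute for it --- testing the signed-cut inequalities $\sum_{i\in S,\,j\notin S}a_{ij}\cos(\theta_i-\theta_j)\geq 0$ over all subsets and showing every spread-out configuration violates one --- is unproven and cannot carry the load in this form. Vectors in $\{\pm1\}^n$ do not certify failure of positive semidefiniteness in general, and in the paper the cut argument appears only \emph{after} the angles are known to lie in two antipodal quadrants (Proposition~\ref{prop:key}), where every cross-quadrant term $\cos(\theta_i-\theta_j)$ is negative and connectivity makes the cut strictly negative. For a generic spread-out configuration satisfying the (perturbed) first-order condition, you have identified no cut with a negative margin, let alone one large enough to survive the $\exp\left(n\log(100n^{1/3})\right)$-size union bound at density $p\asymp n^{-1/3}\log n$. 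So the plan, as stated, does not close the argument at its crux; once Lemma~\ref{lem:Q1_low} is supplied, the rest of your outline (alignment giving $|\sin(\theta_i-\theta_r)|<1/\sqrt{2}$, quadrant confinement, and a convexity-type finish --- where you should note convexity holds only modulo the translation direction $\bone_n$, which the paper sidesteps with the one-dimensional monotonicity argument along $t\mapsto t\btheta$) does match the paper's proof.
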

This theorem says for an Erd\H{o}s-R\'enyi graph $G(n,p)$ with $p = \Omega(n^{-1/3}\log n)$, i.e., each node is of degree about $\Omega(n^{2/3}\log n)$, the optimization landscape has no spurious local minima with high probability. In other words, for the homogeneous Kuramoto model whose underlying network is an Erd\H{o}s-R\'enyi graph, all the oscillators will synchronize and the basin of attraction is the whole phase space minus a set of measure zero with high probability. Note that our analysis does not rely on $n\rightarrow \infty$ which thus provides a non-asymptotic characterization of the landscape unlike the existing works~\cite{CMM18,Ichi04,LLYMG16,MT15} on the continuum limit of Kuramoto model. 

As pointed out previously, the energy function can also be derived from the least squares loss for matrix completion restricted to the torus. Without this additional constraint,~\cite{GeLM16} shows that the loss function has no spurious local minima with high probability if $p = \Omega(n^{-1}\log n)$. Our theorem loses a factor of $n^{2/3}$ and requires $p = {\cal O}(n^{-1/3}\log n)$ mainly due to the constraint of entrywise unit modulus.
However, the numerical simulations indicate that the loss of $n^{2/3}$ factor may well be an artifact of proof, as shown in Section~\ref{s:numerics}. We will point out in the proof where our analysis leads to this potentially suboptimal bound. As a result, we also propose the second open problem:
\begin{open}\label{open:optimal}
Is it true that no spurious local minimum exists with high probability if $p = \Omega(n^{-1}\log n)$?
\end{open}
Note that for an Erd\H{o}s-R\'enyi graph, the graph is connected with high probability if $p >n^{-1}\log n.$ Therefore, our conjecture claims as long as the Erd\H{o}s-R\'enyi graph is connected, then the corresponding optimization landscape should be benign with high probability.

\vskip0.2cm

Besides the two aforementioned open problems, our work also motivates a series of mathematical problems, the solutions to which would significantly contribute to the development of nonconvex optimization as well as the understanding of the synchronization phenomena of coupled oscillators on complex networks. While our work concerns the homogeneous Kuramoto model, many mysteries behind the inhomogeneous Kuramoto models remain to be solved where the natural frequencies $\{\omega_i\}_{i=1}^n$ in~\eqref{eq:kuramoto} are nonzero. It is also interesting to look into the Kuramoto model over other random graphs and the graphs with special structures with tools from the recent progress in the nonconvex optimization. The study of the optimization landscape under the semi-random model, cf~\cite{CG18}, is particularly relevant to the robustness of algorithms against the monotone adversary. Other future directions include finding out what kinds of graph properties will determine the optimization landscape of the energy function. As discussed already, the second smallest eigenvalue of the associated graph Laplacian may not be a perfect candidate because the spurious local minima may still exist even if the graph is highly connected. Thus it is especially appealing to find a single quantity associated with the underlying network which is able to characterize the optimization landscape.

\section{Numerics}\label{s:numerics}

In this section, we present some experimental results on the landscape of $E(\btheta)$ with Erd\H{o}s-R\'enyi graph as the underlying network. We consider systems with small and large size, i.e.,\ $n = 5,10,\ldots,100$ and $n = 100,150,\ldots,1500$ respectively. For each system, we run gradient descent on the energy function $E(\btheta)$ with random initialization which is equivalent to a discrete approximation of the Cauchy problem in~\eqref{def:cauchy}. For the systems with smaller size, we vary $p$ between 0 and 1, and for each pair of $(n,p)$ we take 50 independent random instances; for the systems with larger size, we vary $p$ between 0 and $4n^{-1}\log n$ for each $n$ and also perform 50 experiments for each set of parameters. The gradient descent method is implemented with a fixed step size of 0.005 and the maximum number of iterations of 1000 per round; in each round the local search stops when $\|\nabla E(\btheta)\|\leq 10^{-8}$. The shade of grey in the square represents the fraction of instances for which gradient descent reaches its global minimum. The red line represents the phase transition threshold we conjecture in Open Problem~\ref{open:optimal}, i.e.,\ $p = n^{-1}\log n$.

In Figure~\ref{figure:phase1} we observe that the gradient descent with random initialization finds global minimum with high probability as soon as $p$ becomes slightly greater than $n^{-1}\log n$, i.e., the graph is connected.  Figure~\ref{figure:phase2} delivers a similar message that the gradient descent does not converge to global minimum with high probability when $p =n^{-1} \log n$. On the other hand, when $p$ is slightly larger than $n^{-1}\log n$, i.e., $p = 2n^{-1}\log n$, the iterates from gradient descent converge to $\btheta_0$ successfully with high probability. 

Thus the numerical experiments provide some evidences for the Open Problem~\ref{open:optimal} that the phase transition might exist at $p = \Omega(n^{-1}\log n)$, i.e., no spurious local minima exist if $p \geq cn^{-1}\log n$ with high probability. It is important to note that the global convergence of plain gradient descent with random initialization does not necessarily rule out the possible existence of spurious minima. Therefore, the experiments serve to support our belief that a near-optimal bound of $p$ may be obtained by using new techniques.

%{\color{red}Note that the simulations suggest the capability of plain gradient descent with random initialization to reach the global minimum with high probability at $p = \Omega(n^{-1}\log n)$, which does not necessarily imply the nonexistence of spurious local minima. The experiments serve as a support to our belief that an optimal bound could be proved by improving the current techniques.}

\begin{figure}[h!]
\centering
\begin{minipage}{0.62\textwidth}
\includegraphics[height=80mm]{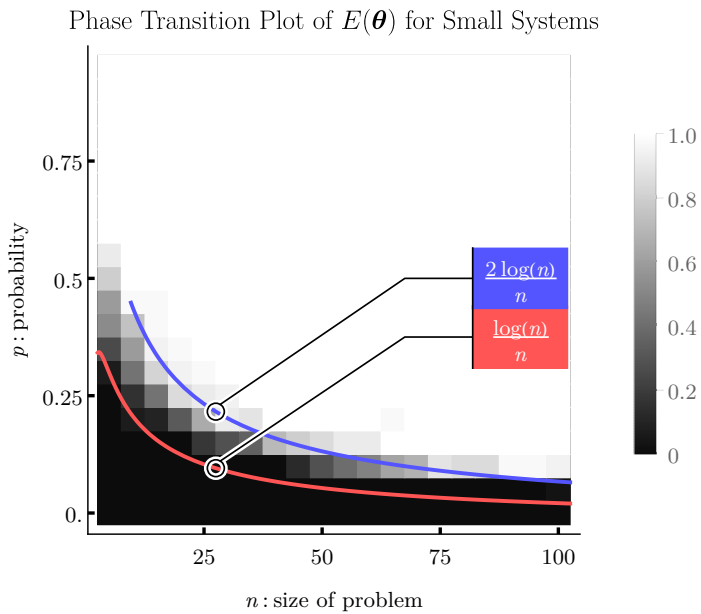}
\end{minipage}
\caption{Phase transition plot on $E(\btheta)$ with smaller systems. 
Here $n$ is chosen between $5$ and 100 and $p$ varies from 0 to 1.
For each pair of $(n,p)$, we run 50 independent experiments. The shade of grey in each square denotes the fraction of trials for which gradient descent reaches its global minimum; the square is lighter if there are more successes. The red line represents the phase transition threshold we conjecture in Open Problem~\ref{open:optimal}, i.e.,\ $p = n^{-1}\log n$; the blue line represents $p = 2n^{-1}\log n$ which achieves high probability of successful recovery.}
\label{figure:phase1}
\end{figure}
\begin{figure}[h!]
\centering
\begin{minipage}{0.62\textwidth}
\includegraphics[height=80mm]{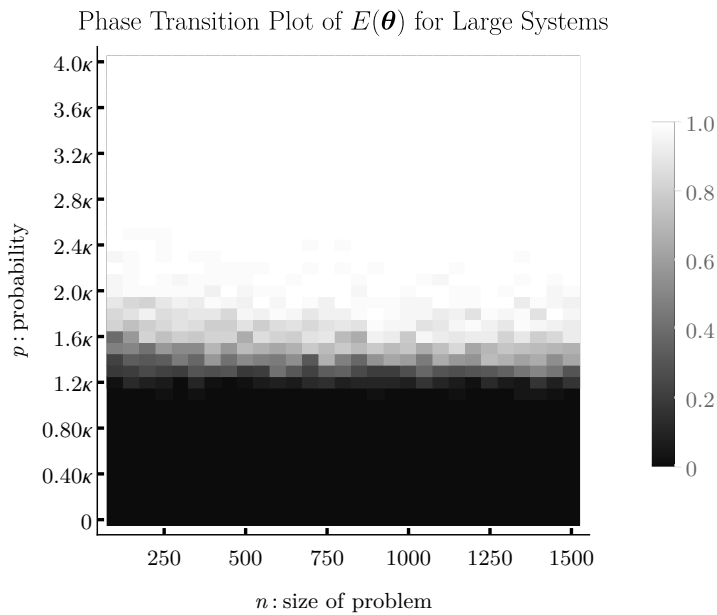}
\end{minipage}
\caption{Phase transition plot on $E(\btheta)$ with larger systems. We let $n$ range from 100 to 1500 and $p =c \kappa$ where $\kappa =\frac{\log n}{n}$ and $0\leq c\leq 4$. For each pair of $(n,p)$, we run 50 independent trials and compute the fraction of successful trials among all 50 instances. The black region stands for failure and the white region means success. A clear phase transition occurs around $p=\frac{1.5\log n}{n}$  and the probability of success is overwhelmingly high when $p \geq \frac{2\log n}{n}$.}
\label{figure:phase2}
\end{figure}

\section{Proofs}\label{s:proofs}
The proofs of main theorems include a key ingredient, which is summarized into the following proposition. This important proposition is inspired by the work of Taylor~\cite{Taylor12}.

\begin{proposition}\label{prop:key}
Let $\btheta = [\theta_1, \cdots, \theta_n]^{\top}$ be a local minimizer of $E(\btheta)$ with $\BA$ as the adjacency matrix of a connected graph. Suppose there exists some $r = \|r\|e^{\mi\theta_r}\in\mathbb{C}$, such that all $\{\theta_i\}_{i=1}^n$ satisfy
\[
|\sin(\theta_i - \theta_r)| < \frac{1}{\sqrt{2}}, \quad\forall~1\leq i\leq n,
\]
then $\btheta = \btheta_0$ is the only local minimizer of $E(\btheta)$ modulo a global translation.
\end{proposition}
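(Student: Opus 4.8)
The plan is to extract a contradiction directly from the second-order necessary condition $\nabla^2 E(\btheta)\succeq 0$ in~\eqref{cond:2nd} using a single, combinatorial test vector, and thereby reduce the configuration to the cohesive regime where the strict convexity established in Section~\ref{s:prelim} finishes the argument. By the global translation invariance of $E$, I would first normalize $\theta_r = 0$, so that the hypothesis reads $|\sin\theta_i| < \tfrac{1}{\sqrt{2}}$, equivalently $\cos^2\theta_i > \tfrac12$, for every $i$. This lets me split the vertex set into $S_+ = \{i : \cos\theta_i > \tfrac{1}{\sqrt2}\}$ and $S_- = \{i : \cos\theta_i < -\tfrac{1}{\sqrt2}\}$; since the hypothesis forces $\cos\theta_i\neq 0$, these two sets are disjoint and exhaust $\{1,\dots,n\}$.

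Recalling from~\eqref{eq:2nd} that $\nabla^2 E(\btheta)$ is the signed graph Laplacian with edge weights $a_{ij}\cos(\theta_i-\theta_j)$, I would evaluate its quadratic form on the sign vector $\bv$ given by $v_i = +1$ on $S_+$ and $v_i=-1$ on $S_-$:
\[
\bv^\top \nabla^2 E(\btheta)\,\bv = \sum_{i<j}a_{ij}\cos(\theta_i-\theta_j)(v_i-v_j)^2 = 4\sum_{i\in S_+,\, j\in S_-}a_{ij}\cos(\theta_i-\theta_j),
\]
since only cross pairs contribute a nonzero $(v_i-v_j)^2$. For any cross pair one has $\cos\theta_i\cos\theta_j < -\tfrac12$ (a positive factor exceeding $\tfrac{1}{\sqrt2}$ times a negative factor below $-\tfrac{1}{\sqrt2}$) while $|\sin\theta_i\sin\theta_j| < \tfrac12$, so that $\cos(\theta_i-\theta_j) = \cos\theta_i\cos\theta_j + \sin\theta_i\sin\theta_j < 0$. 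If both $S_+$ and $S_-$ were nonempty, connectivity of $\BA$ would guarantee at least one edge joining them, rendering the displayed sum strictly negative and contradicting $\nabla^2 E(\btheta)\succeq 0$. Hence one of $S_+, S_-$ must be empty.

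With (say) $S_-=\varnothing$ every $\theta_i$ lies in an arc of width less than $\pi/2$, i.e.\ $|\theta_i-\theta_j|<\pi/2$ for all $i,j$; the case $S_+=\varnothing$ is handled identically after the shift $\theta_r\mapsto\theta_r+\pi$. On this convex cohesive set, $E$ is strictly convex for a connected graph (as noted in Section~\ref{s:prelim}), hence it has at most one critical point; the all-equal configuration is such a critical point, and $\btheta$ is one as well by virtue of being a local minimizer, so the two coincide and $\btheta=\btheta_0$ modulo a global translation.

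I expect the main obstacle to be the curvature-certificate step: one must see that a single discrete direction $\bv$ detects negative curvature precisely when the configuration is genuinely ``split'' between a neighborhood of $\theta_r$ and of $\theta_r+\pi$, and that connectivity is exactly the hypothesis producing a contributing cross edge. This is also where the threshold $1/\sqrt2$ is used sharply, since it is the borderline value guaranteeing $\cos(\theta_i-\theta_j)<0$ for every cross pair; a weaker separation bound would leave cross terms of indeterminate sign and break the argument.
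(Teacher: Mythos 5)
Your reduction to two antipodal quadrants and the elimination of the split configuration is exactly the paper's argument: the paper partitions the angles into $\Gamma$ and $\Gamma^c$ (your $S_+$ and $S_-$), tests the Hessian against the same $\pm 1$ sign vector, and uses connectivity to produce a strictly negative cross term, contradicting $\nabla^2 E(\btheta)\succeq 0$. Where you diverge is the endgame. The paper does not invoke convexity: it introduces the auxiliary function $E(t)=\frac{1}{2}\sum_{i,j}a_{ij}(1-\cos((\theta_i-\theta_j)t))$, bounds $E'(t)\geq \frac{t}{\pi}\sum_{i,j}a_{ij}(\theta_i-\theta_j)^2$ via $x\sin x\geq 2\pi^{-1}x^2$ on $|x|\leq \pi/2$, and concludes from positivity of the Dirichlet form of a connected graph that a non-constant configuration inside the quadrant cannot even be a critical point, since the energy strictly decreases along the ray toward $\bzero$. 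Your finish --- both $\btheta$ and $\btheta_0$ are critical points of a convex function on the convex cohesive set, hence coincide --- is also sound, but it needs one repair: $E$ is never literally strictly convex, because $\bone_n$ lies in the kernel of $\nabla^2 E$ at every point (translation invariance), so ``at most one critical point'' can only hold modulo translation. The repair is standard: in the cohesive region the Hessian is a graph Laplacian with strictly positive weights on the edges of a connected graph, so its kernel is exactly the span of $\bone_n$; restricting to the slice $\{\btheta:\sum_i \theta_i = 0\}$ (equivalently, quotienting by translations) yields genuine strict convexity and hence uniqueness of the critical point there, and the loose ``strictly convex'' phrasing in Section~\ref{s:prelim} that you cite must be read the same way. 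With that caveat your route gives a shorter finish at the cost of this convexity bookkeeping, while the paper's radial monotonicity computation is self-contained and sidesteps the invariance issue entirely.
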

\begin{remark}
In the proof of Theorem~\ref{thm:main1} and~\ref{thm:main2}, we pick 
\[
r(\btheta) : = \sum_{j=1}^ne^{\mi\theta_j}
\]
where $n^{-1}r(\btheta)$ is called the order parameter introduced by Kuramoto in~\cite{Kura75}. The absolute value of the order parameter measures how much the current state is correlated with the global minimum. 
\end{remark}
\begin{proof}
Since $\btheta$ is a local minimizer, it must satisfy the first and second necessary condition~\eqref{cond:1st} and~\eqref{cond:2nd} respectively.

The assumption implies that
\[
\{\theta_i\}_{i=1}^n \subseteq \underbrace{\left\{ \theta : - \frac{\pi}{4} < \theta - \theta_r < \frac{\pi}{4}\right\}}_{\Gamma}\bigcup \underbrace{\left\{\theta : \frac{3\pi}{4} < \theta - \theta_r < \frac{5\pi}{4} \right\}}_{\Gamma^c}
\]
which is the union of two disjoint quadrants. 

We claim that all $\{\theta_i\}_{i=1}^n$ must be inside one of these two quadrants and we will prove it by contradiction.
Suppose both quadrants contain at least one element in $\{\theta_i\}_{i=1}^n$.
Since the graph is connected, the induced partitions $\Gamma$ and $\Gamma^c$ yield
\[
\sum_{i\in \Gamma}\sum_{j\in \Gamma^c}a_{ij} \cos(\theta_i - \theta_j) < 0
\]
which follows from $\cos(\theta_i - \theta_j) < 0$ for all $\theta_i \in\Gamma$ and $\theta_j\in\Gamma^c$ and at least one element $a_{ij}$ is nonzero for $i\in\Gamma$ and $j\in\Gamma^c$ due to the graph connectivity.
However, this contradicts the fact that $\ddiag(\BA\circ \BQ\BQ^{\top}) - \BA\circ \BQ\BQ^{\top} \succeq 0$. 

By letting $\bw\in \RR^n$ whose $i$-th entry $w_i$ satisfies
\[
w_i = 
\begin{cases}
1, & \text{if } \theta_i\in\Gamma, \\
-1, & \text{if }\theta_i\in\Gamma^c,
\end{cases}
\]
there holds
\begin{align*}
\sum_{i\in \Gamma}\sum_{j\in \Gamma^c}a_{ij} \cos(\theta_i - \theta_j) & = \frac{1}{4} \sum_{i\in\Gamma}\sum_{j\in\Gamma^c} a_{ij}\cos(\theta_i - \theta_j) (w_i - w_j)^2\\
& = \frac{1}{4}\bw^{\top} (\ddiag(\BA\circ \BQ\BQ^{\top})-\BA\circ \BQ\BQ^{\top}) \bw \geq 0
\end{align*}
which leads to a contradiction. 

Therefore, all $\{\theta_i\}_{i=1}^n$ must stay in the interior of a single quadrant. Now we show that if this is the case, the only critical point of $E(\btheta)$ within this quadrant must be $\btheta = \bzero$.

Define an auxiliary function as
\[
E(t) \coloneqq \frac{1}{2}\sum_{1\leq i,j\leq n}a_{ij}(1-\cos( (\theta_i - \theta_j)t ))
\]
where $t\in[0,1].$
Then
\[
E'(t) = \frac{1}{2} \sum_{1\leq i,j\leq n} a_{ij}\cdot (\theta_i - \theta_j)\sin((\theta_i - \theta_j)t) \geq \frac{t}{\pi} \sum_{1\leq i,j\leq n}a_{ij}(\theta_i - \theta_j)^2
\]
which follows from $x\sin(x) \geq 2\pi^{-1}x^2$ for $-\pi/2\leq x\leq \pi/2$ and $\theta_i - \theta_j \in (-\pi/2,\pi/2).$

From spectral graph theory~\cite[Chapter 1]{Chung97}, we know the quadratic form $\sum_{i,j}a_{ij}(\theta_i - \theta_j)^2$ must be strictly positive for a connect graph unless $\theta_i =\theta_j$ for $i\neq j$. In other words, if $\btheta \neq \bzero$ (modulo a global translation), $E'(t)  > 0$ implies that $\btheta$ cannot be a critical point of $E(\btheta)$ since the energy function strictly decreases along the straight line pointing from $\btheta$ to the origin. Thus $\btheta$ is not a local minimizer unless $\btheta = \bzero$. This guarantees that $\btheta = \bzero$ holds and it is the only local minimizer within the quadrant. 
\end{proof}

\subsection{Proof of Theorem~\ref{thm:main1}}

\begin{proof}[{\bf Proof of Theorem~\ref{thm:main1}}]
Consider a graph with adjacency matrix $\BA$ and each vertex has degree of at least $\mu (n-1)$, i.e., 
\[
\min_{1\leq i\leq n} \sum_{j:j\neq i} a_{ij} \geq \mu (n-1).
\]
Here we assume the diagonal entries of $\BA$ are zero, i.e., $a_{ii}=0$, since they do not contribute to $E(\btheta).$

Proposition~\ref{prop:key} indicates that it suffices to show that $\{\theta_i\}_{i=1}^n$ be within two disjoint quadrants for any given local minimizer $\btheta$ under $\mu > (3-\sqrt{2})/2.$
For any given local minimizer $\btheta$, we let $\BQ\in\RR^{n\times 2}$ be of the form in~\eqref{def:Q} and we have $(\BQ\BQ^{\top})_{ij} = \cos(\theta_i - \theta_j).$ The second order necessary condition in~\eqref{eq:2nd} implies that
\[
\lag \diag(\BA\BQ\BQ^{\top}) - \BA\circ \BQ\BQ^{\top}, \BQ\BQ^{\top}\rag = \lag \BA,\BQ\BQ^{\top}\rag - \lag \BA,\BQ\BQ^{\top}\circ \BQ\BQ^{\top}\rag \geq 0
\]
where $(\BQ\BQ^{\top})_{ii} = 1, 1\leq i\leq n$ and $\BQ\BQ^{\top}$ is positive semidefinite. 
The inequality above is equivalent to
\begin{align}
\lag \BJ_n , \BQ\BQ^{\top}\rag & \geq \lag \BJ_n , \BQ\BQ^{\top}\circ \BQ\BQ^{\top}\rag + \lag \BJ_n - \BA, \BQ\BQ^{\top} - \BQ\BQ^{\top}\circ \BQ\BQ^{\top}\rag \nonumber \\
&\geq \lag \BJ_n , \BQ\BQ^{\top}\circ \BQ\BQ^{\top}\rag + \lag \BJ_n - \I_n - \BA, \BQ\BQ^{\top} - \BQ\BQ^{\top}\circ \BQ\BQ^{\top}\rag. \label{eq:JQQ}
\end{align}
Our first goal is to get a lower bound of $\lag \BJ_n, \BQ\BQ^{\top}\rag$ by estimating each term on the right hand of~\eqref{eq:JQQ}.

Note that $\BQ\BQ^{\top}$ is rank-2 and its trace is $n$. Thus its Frobenius norm has a lower bound as
\begin{equation}\label{eq:JQQsq}
\lag \BJ_n, \BQ\BQ^{\top} \circ \BQ\BQ^{\top}\rag = \|\BQ\BQ^{\top}\|_F^2 = \lambda_1^2 + \lambda_2^2 \geq \frac{(\lambda_1 + \lambda_2)^2}{2} = \frac{1}{2}(\Tr(\BQ\BQ^{\top}))^2 = \frac{n^2}{2}
\end{equation}
where $\lambda_1$ and $\lambda_2$ are the eigenvalues of $\BQ\BQ^{\top}$ and $\Tr(\BQ\BQ^{\top}) = n.$

Since each row sum of $\BA$ is at least $\mu(n-1)$, we know that $\BJ_n - \I_n - \BA$ has at most $(1-\mu)(n-1)n$ nonzero entries. Moreover, the absolute values of each entry in $\BQ\BQ^{\top}$ and $\BQ\BQ^{\top}\circ \BQ\BQ^{\top}$ are bounded by 1. Hence,
\begin{align}
|\lag \BJ_n -\I_n - \BA, \BQ\BQ^{\top} - \BQ\BQ^{\top}\circ \BQ\BQ^{\top}\rag| 
& \leq \sum_{i\neq j}(1-a_{ij}) | \cos(\theta_i - \theta_j) - \cos^2(\theta_i - \theta_j)| \nonumber \\
& \leq 2\sum_{i\neq j}(1 - a_{ij}) \leq 2(1-\mu)(n-1)n. \label{eq:JQQup}
\end{align}

Combining~\eqref{eq:JQQsq} and~\eqref{eq:JQQup} with~\eqref{eq:JQQ}, we have a lower bound for the correlation between $\BQ$ and the constant vector $\bm 1_n$ as follows
\begin{align*}
\lag\BJ_n , \BQ\BQ^{\top}\rag   \geq \frac{n^2}{2} - 2(1-\mu)(n-1)n = \left( 2\mu - \frac{3}{2}\right)n^2 + 2(1-\mu)n.
\end{align*}

Now we consider the rescaled order parameter $r$ as 
\[
r = \sum_{j=1}^n e^{\mi \theta_j} = \|r\| e^{\mi \theta_r}
\]
and its $\ell_2$-norm is bounded below by
\begin{align}
\|r\|^2 & = \left(\sum_{j=1}^n \cos \theta_j\right)^2 + \left(\sum_{j=1}^n \sin \theta_j\right)^2 = \sum_{1\leq i,j\leq n} \left( \cos\theta_i \cos\theta_j +  \sin\theta_i \sin\theta_j\right) \nonumber \\
& = \lag\BJ_n , \BQ\BQ^{\top}\rag  \geq \left( 2\mu - \frac{3}{2}\right)n^2 + 2(1-\mu)n. \label{eq:rlow}
\end{align}

%and $\|z\|^2 = \left( \sum_{j=1}^n \cos \theta_j \right)^2 + \left( \sum_{j=1}^n \sin\theta_j  \right)^2 = \|\BQ^{\top}\bone_n\|^2.$

\vskip0.5cm
For every $i\in[n]$, there holds
\[
\|r\| e^{-\mi(\theta_i - \theta_r)} = r e^{-\mi\theta_i} = \sum_{j=1}^n e^{-\mi(\theta_i - \theta_j)}.
\]
By taking the imaginary parts, we have
\begin{equation}\label{eq:r-sin}
\|r\|\cdot \sin(\theta_i - \theta_r) = \sum_{j=1}^n \sin(\theta_i - \theta_j) = \sum_{j=1}^n (1 - a_{ij}) \sin(\theta_i - \theta_j), \quad \forall 1\leq i\leq n,
\end{equation}
where the second equality follows from the first order necessary condition in~\eqref{cond:1st}, i.e., 
\[
\sum_{j=1}^n a_{ij}\sin(\theta_i - \theta_j) = 0, \quad 1\leq i\leq n.
\]

By combining~\eqref{eq:JQQup} with~\eqref{eq:r-sin}, we have
\[
\|r\| \cdot |\sin(\theta_i - \theta_r)| = \left| \sum_{j=1}^n (1 - a_{ij}) \sin(\theta_i - \theta_j) \right| \leq \left| \sum_{j=1}^n (1 - a_{ij}) \right| \leq  (1-\mu)(n-1).
\]
Note that
$\|r\|^2 = \lag \BJ_n , \BQ\BQ^{\top}\rag \geq  \left( 2\mu - \frac{3}{2}\right)n^2 + 2(1-\mu)n$ in~\eqref{eq:rlow}. Therefore,
\[
|\sin(\theta_i - \theta_r)|^2 \leq \frac{(1-\mu)^2(n-1)^2}{\|r\|^2} \leq \frac{2(1-\mu)^2}{4\mu-3 + 4(1-\mu)n^{-1}} < \frac{1}{2}
\]
for any $1\leq i\leq n$ provided that $(3-\sqrt{2})/2 \leq \mu \leq 1$, which immediately implies
\[
|\sin(\theta_i - \theta_r)| < \frac{1}{\sqrt{2}}, \quad 1\leq i\leq n.
\]
Here $r$ and $\btheta$ satisfy the assumption in Proposition~\ref{prop:key}, and moreover the underlying graph is connected given that $\mu \geq (3-\sqrt{2})/2$. Now applying Proposition~\ref{prop:key} yields the desired result. 
\end{proof}

\subsection{Proof of Theorem~\ref{thm:main2}}
\subsubsection{The road map for proof of Theorem~\ref{thm:main2}}
To prove Theorem~\ref{thm:main2}, besides Proposition~\ref{prop:key}, we will also use the toolbox of random matrix and concentration of measure which are widely used in many signal processing and statistical inference problem. 
Simply speaking, the main strategy follows from finding a lower bound of the rescaled order parameter $\|r\| = \|\sum_{j=1}^n e^{\mi \theta_j}\|$ which is exactly equal to $\|\BQ^{\top}\bone_n\|$; obtaining an upper bound of $|\Imag(re^{-\mi \theta_i})|$ for a local minimizer $\btheta = \{\theta_i\}_{i=1}^n$; and finally applying Proposition~\ref{prop:key} to finish the proof. Note that by applying Theorem 15 in~\cite{BBV16} with a proper transformation, the same performance bound with respect to $p$ can be obtained to ensure a benign optimization landscape of $E(\btheta)$. We provide an alternative proof based on the ``two-disjoint-quadrant" argument (Proposition~\ref{prop:key}) and get a tighter bound of the lower bound of order parameter, which are of independent interests.

From now on, we set 
\begin{equation}\label{def:p}
p := \frac{32\gamma \log n}{n^{1/3}}
\end{equation}
in~\eqref{eq:ER} under the assumptions of Theorem~\ref{thm:main2} and also define
\[
\BDelta := \BA - p\BJ_n
\]
as the centered random adjacency matrix. Here for the simplicity of presentation, we set the diagonal entries of $\BA$ equal to $p$ since they do not appear in $E(\btheta).$

Now we present two supporting lemmata whose proof we defer to Section~\ref{ss:supportlemma} and show how these two results lead to the final proof of Theorem~\ref{thm:main2}.
\begin{lemma}\label{lem:aux}
For $\BA$ be an Erd\H{o}s-R\'enyi random graph $G(n,p)$, we have
\begin{equation}\label{eq:A-EA}
\| \BA - \E(\BA)\| \leq \sqrt{2\gamma np(1-p)\log n}  + \frac{2\gamma \log n}{3} 
\end{equation}
with probability at least $1 - 2n^{-\gamma+1}.$ In particular, there holds
\[
\max_{1\leq i\leq n} \| \be^{\top}_i(\BA - \E(\BA))\| \leq \sqrt{2\gamma np(1-p)\log n}  + \frac{2\gamma \log n}{3}.
\]
Moreover, the infinity norm bound of $\BDelta\bone_n$ obeys
\[
 \|\BDelta\bone_n\|_{\infty} \leq \sqrt{2\gamma np (1-p)\log n}  + \frac{2\gamma \log n}{3}
\]
 with probability at least $1 - 2n^{-\gamma+1}.$

\end{lemma}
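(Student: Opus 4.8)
The plan is to obtain all three bounds from two standard concentration tools: the matrix Bernstein inequality for the operator-norm statement \eqref{eq:A-EA} (from which the row-norm bound follows for free), and the scalar Bernstein inequality together with a union bound for the $\ell_\infty$ estimate on $\BDelta\bone_n$. The starting point is to record that, since we set $a_{ii}=p$, the centered matrix $\BDelta = \BA - \E(\BA) = \BA - p\BJ_n$ has zero diagonal and decomposes as a sum of independent, mean-zero, symmetric matrices indexed by the unordered pairs, $\BDelta = \sum_{i<j}\BX_{ij}$ with $\BX_{ij} := (a_{ij}-p)(\be_i\be_j^\top + \be_j\be_i^\top)$ and the $\{a_{ij}\}_{i<j}$ mutually independent.

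First I would handle \eqref{eq:A-EA}. Each summand has operator norm $|a_{ij}-p|\le 1$, so the uniform bound $L=1$ applies. The matrix variance statistic is computed from $\E[(a_{ij}-p)^2(\be_i\be_j^\top+\be_j\be_i^\top)^2] = p(1-p)(\be_i\be_i^\top + \be_j\be_j^\top)$; summing over all pairs, each diagonal dyad appears $n-1$ times, so $\|\sum_{i<j}\E\BX_{ij}^2\| = (n-1)p(1-p) =: \nu$. Matrix Bernstein then gives $\Pr(\|\BDelta\|\ge t) \le 2n\exp\left(-\tfrac{t^2/2}{\nu + t/3}\right)$. Choosing $t = \sqrt{2\nu\gamma\log n} + \tfrac{2\gamma\log n}{3}$ makes the exponent at most $-\gamma\log n$ (this is the elementary fact that $\tfrac{t^2/2}{\nu+Lt/3}\ge u$ whenever $t\ge \sqrt{2\nu u}+\tfrac{2Lu}{3}$, obtained by solving the underlying quadratic), so the failure probability is at most $2n\cdot n^{-\gamma} = 2n^{-\gamma+1}$; bounding $\nu\le np(1-p)$ recovers the stated right-hand side. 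The row-norm bound is then immediate, since $\|\be_i^\top\BDelta\| = \|\BDelta^\top\be_i\| \le \|\BDelta\|$ for every $i$, so it holds on the very same event.

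For the last estimate I would treat each coordinate $(\BDelta\bone_n)_i = \sum_{j\ne i}(a_{ij}-p)$ as a scalar sum of $n-1$ independent, mean-zero variables bounded by $1$ with total variance $(n-1)p(1-p) = \nu$. The scalar Bernstein inequality yields exactly the same tail $2\exp\left(-\tfrac{t^2/2}{\nu+t/3}\right)$ for a fixed $i$; with the same choice of $t$ each coordinate fails with probability at most $2n^{-\gamma}$, and a union bound over the $n$ coordinates gives the claimed $2n^{-\gamma+1}$. I do not expect a genuine obstacle here, as the content is essentially careful bookkeeping. The only points requiring attention are the diagonal convention $a_{ii}=p$ (which is what makes $\BDelta$ exactly centered and the per-term norm bound clean), the exact evaluation of the variance statistic $\nu=(n-1)p(1-p)$ so that the constants $\sqrt{2}$ and $2/3$ emerge precisely, and keeping the ambient-dimension/union-bound factor $n$ inside the probability so that the exponent $-\gamma\log n$ converts it into the advertised $2n^{-\gamma+1}$ rather than $2n^{-\gamma}$.
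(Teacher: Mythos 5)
Your proposal is correct and follows essentially the same route as the paper: the same decomposition $\BDelta=\sum_{i<j}(a_{ij}-p)(\be_i\be_j^{\top}+\be_j\be_i^{\top})$, matrix Bernstein with the same variance bound and choice of $t$ for the operator norm (the row bound following by $\|\be_i^{\top}\BDelta\|\leq\|\BDelta\|$), and scalar Bernstein plus a union bound over coordinates for $\|\BDelta\bone_n\|_{\infty}$. Your slightly sharper variance constant $(n-1)p(1-p)$ versus the paper's $np(1-p)$ is immaterial.
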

The proof of this lemma is very standard; it follows from direct application of matrix Bernstein inequality. The purpose of this lemma is to show how close $\BA$ is to its expectation; moreover the estimation of the upper bound of $|\Imag(re^{-\mi \theta_i})|$ is based on this fact. 
The next lemma demonstrates that with high probability, the rescaled order parameter of a local minimizer, i.e.,  $r = \sum_{j=1}^n e^{\mi \theta_j}$, and the global minimizer $\bone_n$ are highly correlated.
\begin{lemma}\label{lem:Q1_low}
With probability at least 
\[
1 - 4\exp\left(n \left( \log (100n^{1/3}) - 2\gamma\log n\right)\right), \quad \gamma\geq 1,
\]
there holds
\[
\|\BQ^{\top}\bone_n\|^2 \geq n^2 (1 - 3n^{-\frac{1}{3}})
\]
uniformly for all $\BQ\in\RR^{n\times 2}$ satisfying the first and second necessary conditions.
\end{lemma}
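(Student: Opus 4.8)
The plan is to extract from the second-order condition a quadratic inequality that ties the order parameter $\|\BQ^\top\bone_n\|^2$ to the energy, and then to control the random fluctuations uniformly over all critical points by a covering argument. Concretely, I would first contract the Hessian condition $\nabla^2 E(\btheta)=\ddiag(\BA\BQ\BQ^\top)-\BA\circ\BQ\BQ^\top\succeq 0$ against the positive semidefinite matrix $\BQ\BQ^\top$, which yields $\lag\BA,\BQ\BQ^\top\circ\BQ\BQ^\top\rag\le\lag\BA,\BQ\BQ^\top\rag$. Writing $\BA=p\BJ_n+\BDelta$ and using $\lag\BJ_n,\BQ\BQ^\top\rag=\|\BQ^\top\bone_n\|^2$ together with $\lag\BJ_n,\BQ\BQ^\top\circ\BQ\BQ^\top\rag=\|\BQ\BQ^\top\|_F^2$, this becomes
\[
p\,\|\BQ^\top\bone_n\|^2\ \ge\ p\,\|\BQ\BQ^\top\|_F^2+\lag\BDelta,\ \BQ\BQ^\top\circ\BQ\BQ^\top-\BQ\BQ^\top\rag .
\]
Equivalently, since $2E(\btheta)=\lag\BA,\BJ_n-\BQ\BQ^\top\rag$, one has the exact identity $\|\BQ^\top\bone_n\|^2=n^2-\tfrac{2E(\btheta)}{p}+\tfrac1p\lag\BDelta,\BJ_n-\BQ\BQ^\top\rag$, so the lemma reduces to two estimates: an upper bound on the energy (equivalently, on the rank-two slack of $\BQ\BQ^\top$), and a uniform bound on the $\BDelta$-remainder.

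The remainder $\lag\BDelta,\BJ_n-\BQ\BQ^\top\rag$ and its companions are bilinear forms in the independent centered Bernoulli variables $\Delta_{ij}$. For a single configuration $\btheta$, Bernstein's inequality (the tool behind Lemma~\ref{lem:aux}) controls such a form at scale of order $n^{4/3}\log n$ with failure probability $\exp(-c\,\gamma n\log n)$, so that $\tfrac1p$ times it is ${\cal O}(n^{5/3})={\cal O}(n^2 n^{-1/3})$. To make this hold simultaneously for every second-order critical point I would cover the torus $[0,2\pi)^n$ by an $\ell_\infty$-net of mesh of order $n^{-1/3}$, whose cardinality is at most $(100\,n^{1/3})^n$, establish a Lipschitz bound in $\btheta$ for the relevant forms, and union bound over the net. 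This is exactly the source of the stated probability $1-4\exp\!\big(n(\log(100n^{1/3})-2\gamma\log n)\big)$: the factor $n^{1/3}$ inside the logarithm is the net resolution, the exponent $2\gamma\log n$ is the per-point Bernstein tail, and the constant $4$ counts the handful of forms that must be controlled.

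The hard part is the energy bound, and with it the upgrade from the easy constant $\tfrac12$ to the sharp constant $1$. Indeed $\|\BQ\BQ^\top\|_F^2=\tfrac12(n^2+|\widetilde r|^2)$ with $\widetilde r=\sum_{j}e^{2\mi\theta_j}$, and no contraction of the first- and second-order conditions bounds $|\widetilde r|^2$ from below: contracting $\nabla^2E$ against $\BQ\BQ^\top$, $\widetilde{\BQ}\widetilde{\BQ}^\top$, and so on only produces relations among successive frequencies and never closes. Thus the algebra alone gives only $\|\BQ^\top\bone_n\|^2\ge\tfrac12(n^2+|\widetilde r|^2)-{\cal O}(n^{5/3})\ge\tfrac{n^2}{2}-{\cal O}(n^{5/3})$, which reflects a genuine circular dependence: ``$\|\BQ^\top\bone_n\|$ large'' is equivalent to ``the angles are concentrated,'' to ``$|\widetilde r|$ large,'' to ``$\BQ\BQ^\top$ essentially rank one,'' to ``$E(\btheta)$ small.'' Breaking this circularity is where randomness must enter: I would argue that for any configuration whose angles are genuinely spread the first-order residual $\sum_{j}\Delta_{ij}\sin(\theta_i-\theta_j)$ cannot vanish at every vertex with high probability (again via the net), so that a spread configuration cannot be a critical point at all. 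The surviving critical points then have $\BQ\BQ^\top$ nearly rank one and aligned with $\bone_n$, whence $\|\BQ\BQ^\top\|_F^2=n^2-{\cal O}(n^{5/3})$ and the near-$n^2$ bound $\|\BQ^\top\bone_n\|^2\ge n^2(1-3n^{-1/3})$ follows, after which Proposition~\ref{prop:key} can be invoked through the two-quadrant argument. I expect this step—ruling out spread critical points uniformly—to be the main obstacle, and the interplay between the net resolution $n^{-1/3}$ and the degree $np\sim n^{2/3}\log n$ is precisely what forces $p\gtrsim n^{-1/3}\log n$ and is the likely-suboptimal ingredient.
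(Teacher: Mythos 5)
Your first half is exactly the paper's: contracting $\ddiag(\BA\BQ\BQ^{\top})-\BA\circ\BQ\BQ^{\top}\succeq 0$ against $\BQ\BQ^{\top}$, splitting $\BA=p\BJ_n+\BDelta$, and controlling $\lag\BDelta,\BQ\BQ^{\top}\rag$ and $\lag\BDelta,\BQ\BQ^{\top}\circ\BQ\BQ^{\top}\rag$ uniformly by Bernstein plus an $\ell_\infty$-net of cardinality $(100 n^{1/3})^n$ with a Lipschitz transfer — this is precisely Lemma~\ref{lem:RIP}/Corollary~\ref{cor:RIP}, and it yields $\|\BQ^{\top}\bone_n\|^2\geq(1-2\delta)\|\BQ\BQ^{\top}\|_F^2\geq n^2(1-2\delta)/2$. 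You also correctly diagnose that this stalls at the constant $\tfrac12$ and that the missing ingredient is a near-rank-one statement $\|\BQ\BQ^{\top}\|_F^2\approx n^2$. But that upgrade is the actual content of the lemma, and your proposal leaves it unproven: you only assert that ``spread'' configurations cannot be critical points with high probability, to be shown by anti-concentration of the residuals $\sum_j\Delta_{ij}\sin(\theta_i-\theta_j)$ over the net. That route has concrete problems. First, criticality is an exact equality, so a union bound over net points requires a quantitative lower bound on the residuals at net points that beats the Lipschitz drift, which is of order $\sum_j|a_{ij}-p|\cdot(\text{mesh})\sim np\cdot n^{-1/3}\sim n^{1/3}\log n$, while the natural anti-concentration scale of a single residual is $\sqrt{np}\sim n^{1/3}\sqrt{\log n}$ — the two are comparable, so the argument is at best borderline and needs a genuinely new joint small-ball estimate for $n$ \emph{dependent} residuals (they share the $\Delta_{ij}$'s), which you do not supply. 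Second, your dichotomy is not well defined: the first-order condition forces $\sum_j\Delta_{ij}\sin(\theta_i-\theta_j)=-p\,\Imag(r e^{-\mi\theta_i})$, which need not vanish in the intermediate regime between ``spread'' and ``aligned,'' so ruling out only fully spread states does not by itself give $\|\BQ\BQ^{\top}\|_F^2= n^2-{\cal O}(n^{5/3})$.

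The paper closes this gap with a short \emph{deterministic} bootstrap that needs no net and no anti-concentration, only the two events you have already established (the uniform RIP bound and the operator-norm bound $\|\BDelta\|\lesssim\sqrt{np\log n}$ from Lemma~\ref{lem:aux}). Rotate so that $\BQ=[\bx~\by]$ with $\bx\perp\by$; the first-order condition reads $\BA\bx\circ\by=\BA\by\circ\bx$, i.e.
\[
p(\bone_n^{\top}\bx)\,\by-p(\bone_n^{\top}\by)\,\bx=(\BDelta\by)\circ\bx-(\BDelta\bx)\circ\by .
\]
By orthogonality the left side has squared norm $p^2\bigl(|\bone_n^{\top}\bx|^2\|\by\|^2+|\bone_n^{\top}\by|^2\|\bx\|^2\bigr)$, while the right side is at most $2n\|\BDelta\|^2$ since $\|\diag(\bx)\BDelta\by\|\leq\|\BDelta\|\|\by\|$. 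The weak bound guarantees (WLOG) $|\bone_n^{\top}\bx|^2\geq n^2(1-2\delta)/4$, whence $\|\by\|^2\leq 8\|\BDelta\|^2/(np^2(1-2\delta))$ is small, so $\|\bx\|^2=n-\|\by\|^2$ is nearly $n$ and $\|\BQ\BQ^{\top}\|_F^2=\|\bx\|^4+\|\by\|^4\geq\|\bx\|^4$ is nearly $n^2$; feeding this back into $\|\BQ^{\top}\bone_n\|^2\geq(1-2\delta)\|\BQ\BQ^{\top}\|_F^2$ gives $n^2(1-3n^{-1/3})$. So the idea you were missing is to use the first-order condition in this orthogonalized, norm-level form, where the randomness enters only through $\|\BDelta\|$ — a bound that is automatically uniform in $\BQ$ — rather than through pointwise anti-concentration of the residuals.
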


\begin{proof}[\bf Proof of Theorem~\ref{thm:main2}]
Let $r(\btheta) := \sum_{j=1}^n e^{\mi \theta_j}$ where $\{\theta_j\}_{j=1}^n$ is a local minimizer and for simplicity, we use ``$r$" to represent $r(\btheta)$.
By using the first order necessary condition $\sum_{j=1}^n a_{ij}\sin(\theta_i - \theta_j) = 0$ and Cauchy-Schwarz inequality, we have 
\begin{align*} 
p |\Imag(re^{-\mi\theta_i})|& =  p\left| \sum_{j=1}^n \sin(\theta_i - \theta_j)\right|  \\
& = \left| \sum_{j=1}^n (a_{ij} - p) \sin(\theta_i - \theta_j) \right| \\
& \leq  |\sin\theta_i| \cdot \left| \sum_{j=1}^n (a_{ij} - p)\cos \theta_j\right| +  |\cos\theta_i| \cdot \left| \sum_{j=1}^n (a_{ij} - p)\sin \theta_j\right| \\
& \leq \sqrt{ \left(\sum_{j=1}^n (a_{ij}-p) \cos \theta_j \right)^2  + \left(\sum_{j=1}^n (a_{ij}-p) \sin \theta_j \right)^2 } \\
& = \| \be_i^{\top}(\BA-p\BJ_n)\BQ\| = \|\be_i^{\top}\BDelta \BQ\|.
\end{align*}

Now we give an upper bound of $\| \be_i^{\top}(\BA-p\BJ_n)\BQ\|$:
\begin{align*}
\| \be_i^{\top}\BDelta \BQ\| & \leq \frac{1}{n} | \be_i^{\top} \BDelta \bone_n | \cdot \| \BQ^{\top}\bone_n \|  + \left\|\be_i^{\top}\BDelta \left(\I_n - \frac{\bone_n\bone_n^{\top}}{n}\right)\BQ\right\| \\
& \leq \frac{1}{n}\|\BDelta \bone_n\|_{\infty} \cdot \|\BQ^{\top}\bone_n\| + \|\be_i^{\top}\BDelta \| \cdot \left\| \left(\I_n - \frac{\bone_n\bone_n^{\top}}{n}\right)\BQ\right\|_F.
\end{align*}
In fact, the suboptimal bound of our theorem results from the estimation of the second term above. The possible solution may require a sharper upper bound of the correlation between the local minimizers $\BQ$ and each row of $\BDelta$.

It is straightforward to get an upper bound for each term in the expression above, 
\begin{align*}
& \|\BDelta \bone_n\|_{\infty} \leq \sqrt{2\gamma np(1-p)\log n} + \frac{2\gamma \log n}{3}, \\
& \|\be_i^{\top}\BDelta \| \leq \sqrt{2\gamma np (1-p)\log n} + \frac{2\gamma \log n}{3}, \\
& \left\| \left(\I_n - \frac{\bone_n\bone_n^{\top}}{n}\right)\BQ\right\|_F^2 = n - \frac{1}{n}\|\BQ^{\top}\bone_n\|^2 \leq n - n\left(1 - 3n^{-\frac{1}{3}}\right)  \leq 3n^{\frac{2}{3}}, \\
& \|\BQ^{\top}\bone_n\| \leq n,
\end{align*}
where the first two inequalities are based on Lemma~\ref{lem:aux} and the last two are implied by Lemma~\ref{lem:Q1_low}.

Therefore, each $\|\be_i^{\top}\BDelta \BQ\|$ is uniformly bounded by
\begin{align*}
\| \be_i^{\top}\BDelta \BQ\| & \leq \left( 1 + \sqrt{3}n^{\frac{1}{3}}\right)\cdot \left( \sqrt{2\gamma n p(1-p)\log n} + \frac{2\gamma \log n}{3} \right) % \\
%& < \sqrt{ 8\gamma n^{\frac{5}{3}} p(1-p)\log n}
\end{align*}
for all $\BQ$.

For reasonably large $n$, we consider $p^{-1}\|\be_i^{\top}\BDelta \BQ\|$ and it satisfies
\begin{align}\label{ineq:perturb}
% |\Imag(re^{-\mi\theta_i})| & \leq \frac{\|\be_i^{\top}\BDelta \BQ\|}{p} \leq \sqrt{\frac{ 8\gamma  n^{\frac{5}{3}} (1-p)\log n}{p}} \\
|\Imag(re^{-\mi\theta_i})| & \leq \frac{\|\be_i^{\top}\BDelta \BQ\|}{p} \leq 
2n^{\frac{1}{3}} \left( \sqrt{ \frac{ 2\gamma n (1-p)\log n}{p}} + \frac{2\gamma \log n}{3p} \right) \nonumber \\
& =  \frac{n}{2}  \left( \sqrt{ 1-p}  + \frac{n^{-\frac{1}{3}}}{12} \right) \nonumber \\
 & < n\sqrt{\frac{ 1-3n^{-\frac{1}{3}}}{2}}
\end{align}
when $p = \frac{32\gamma \log n}{n^{1/3}} > \frac{3}{n^{1/3}}$ for $\gamma\geq 1$. On the other hand, we have
\begin{align}\label{ineq:correlation}
\|r\|^2 = \|\BQ^{\top}\bone_n\|^2 \geq n^2(1 - 3n^{-\frac{1}{3}}),
\end{align}
where the lower bound is given by Lemma~\ref{lem:Q1_low}.

For every $i\in [n]$ we consider an upper bound of $|\sin(\theta_i - \theta_r)|$ by using
\[
\left| \sum_{j=1}^n \sin(\theta_i - \theta_j)\right| = \left| \Imag(re^{-\mi\theta_i}) \right| = \|r\| \cdot |\sin(\theta_i - \theta_r)| \leq \frac{\|\be_i^{\top}\BDelta \BQ\|}{p}
\]
where $r = \sum_{j=1}^n e^{\mi\theta_j} = \|r\|e^{\mi\theta_r}$.
Combining~\eqref{ineq:perturb} with~\eqref{ineq:correlation} yields
\begin{equation}\label{eq:angle}
 |\Imag(re^{-\mi\theta_i})| = \|r\|\cdot |\sin(\theta_i -\theta_r)| \leq \frac{ \|\be_i^{\top}\BDelta \BQ\|}{p} < n\sqrt{\frac{ 1-3n^{-\frac{1}{3}}}{2}} \leq \frac{\|\BQ^{\top}\bone_n\|}{\sqrt{2}} = \frac{\|r\|}{\sqrt{2}}.
\end{equation}
Therefore, we have
\[
  |\sin(\theta_i - \theta_r)| < \frac{1}{\sqrt{2}},\quad \forall\, 1\leq i\leq n.
\]
On the other hand, we assume $p = 32\gamma n^{-\frac{1}{3}}\log n \gg n^{-1}\log n$ and thus by Theorem 5.8 in~\cite[Chapter 5]{Hof16}, the Erd\H{o}s-R\'enyi random graph is connected with overwhelmingly high probability. Now all the assumptions of Proposition~\ref{prop:key} are fulfilled and we have $\btheta = \bzero$ as the only local minimizer.
\end{proof}

\subsubsection{Proof of the supporting lemmata in Theorem~\ref{thm:main2}}
\label{ss:supportlemma}
\begin{theorem}[\bf Matrix Bernstein, Theorem 1.4 in~\cite{Tropp12}]
Consider a finite sequence $\{\BZ_k\}$ of independent random, self-adjoint matrices with dimension $n\times n$. Assume that each random matrix satisfies
\[
\E(\BZ_k) = 0, \qquad \|\BZ_k\| \leq R.
\]
Then for all $t \geq 0$
\[
\Pr\left( \left\| \sum_{k}\BZ_k\right\| \geq t\right) \leq n\cdot \exp\left( -\frac{t^2/2}{\sigma^2 + Rt/3}\right)
\]
where 
$\sigma^2 = \|\sum_k \E(\BZ_k^2)\|.$ In other words,
\[
\left\| \sum_{k}\BZ_k\right\| \leq \frac{2\gamma R \log n}{3} + \sqrt{2\gamma\sigma^2\log n}
\]
holds with probability at least $1 - 2n^{-\gamma+1}.$

\end{theorem}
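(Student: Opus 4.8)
The tail inequality stated here is the matrix analogue of the classical scalar Bernstein inequality, and I would treat the exponential tail bound as a known result (Tropp~\cite{Tropp12}); what genuinely needs checking is the passage from that tail bound to the explicit high-probability statement in the ``In other words'' clause. The plan is therefore two-fold: first recall the mechanism behind the tail bound, then substitute the specific value of $t$ that produces the stated deviation and probability.

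For the tail bound itself, the route is the matrix Laplace transform method. One writes, for $\theta > 0$,
\[
\Pr\left(\left\|\sum_k \BZ_k\right\| \geq t\right) \leq e^{-\theta t}\, \E\, \tr\exp\left(\theta \sum_k \BZ_k\right),
\]
after splitting the operator norm into control of the largest and smallest eigenvalues and applying Markov's inequality to the matrix moment generating function. The key analytic input is Lieb's concavity theorem, which yields subadditivity of the matrix cumulant generating function and lets one bound the trace-exponential by an expression of the form $n\exp\bigl(\sum_k g(\theta)\,\lambda_{\max}(\E \BZ_k^2)\bigr)$; optimizing over $\theta$ and invoking $\|\BZ_k\|\leq R$ together with $\sigma^2 = \|\sum_k \E \BZ_k^2\|$ produces the displayed bound $n\exp(-\tfrac{t^2/2}{\sigma^2 + Rt/3})$. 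I would cite~\cite{Tropp12} for this step rather than reproduce it.

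The substantive (and elementary) step is the corollary. Set
\[
a := \frac{2\gamma R\log n}{3}, \qquad b := \sqrt{2\gamma\sigma^2\log n}, \qquad t^* := a + b,
\]
so that $t^*$ is exactly the claimed deviation. The goal is to show the exponent $f(t^*) := \frac{(t^*)^2/2}{\sigma^2 + Rt^*/3}$ is at least $\gamma\log n$, which makes the tail at most $n\cdot n^{-\gamma} = n^{-\gamma+1} \leq 2n^{-\gamma+1}$. Equivalently one verifies $(t^*)^2/2 \geq \gamma\log n\,(\sigma^2 + Rt^*/3)$. The choice of $a,b$ is engineered precisely so that $\gamma\log n\,\sigma^2 = b^2/2$ and $\frac{\gamma\log n\,R}{3} = a/2$; substituting, the right-hand side becomes $\frac12(a^2 + ab + b^2)$ while the left-hand side is $\frac12(a^2 + 2ab + b^2)$, so their difference is exactly $ab/2 \geq 0$. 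This closes the argument, and incidentally shows the factor $2$ in the probability is harmless slack, absorbing the two-sided $\pm$ contributions (the operator norm controls both $\lambda_{\max}$ and $-\lambda_{\min}$).

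The only genuine obstacle is the tail bound itself, whose proof rests on operator-convexity machinery (Lieb's theorem and the matrix Laplace transform), and which is exactly why I would invoke it as a black box; once it is in hand, the deviation-to-probability conversion is just the short algebraic identity above, the only clever point being the matching of $a$ and $b$ to the two terms $Rt/3$ and $\sigma^2$ in the denominator.
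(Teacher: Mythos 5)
Your proposal is correct and takes essentially the same route as the paper: the paper states this result purely as a citation to Tropp's Theorem 1.4 and provides no proof of its own, exactly as you invoke the tail bound as a black box. Your algebraic verification of the ``In other words'' clause --- checking that $t = \frac{2\gamma R\log n}{3} + \sqrt{2\gamma\sigma^2\log n}$ makes the exponent at least $\gamma\log n$, with the surplus $ab/2$ and the factor $2$ absorbing the two-sided $\pm$ contributions --- is sound and fills in the conversion step that the paper leaves implicit.
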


\begin{proof}[{\bf Proof of Lemma~\ref{lem:aux}}]
The difference between $\BA$ and $\E(\BA)$ is a sum of rank-1 matrices:
\[
\BA - \E(\BA) = \sum_{i<j} (a_{ij} - p)(\BE_{ij} + \BE_{ji})
\]
where $\E(a_{ij}) = p$ and $\BE_{ij} = \be_i\be_j^{\top}$.  Note that each component is bounded by $\max\{p, 1-p\}$ in operator norm since $\| \BE_{ij} + \BE_{ji} \| = 1$.
The variance is
\[
\E (\BA - \E(\BA))^2 = p(1-p)\sum_{i<j} (\BE_{ii} + \BE_{jj}) \preceq np(1-p) \I_n
\]
which follows from the independence among all entries $\{a_{ij}\}.$ Applying matrix Bernstein inequality, we have
\[
\Pr( \| \BA - \E(\BA) \| \geq t ) \leq 2n\cdot \exp\left( -\frac{t^2/2}{np(1-p) + t/3}\right) \leq 2n^{-\gamma + 1} 
 \]
 with $t = \sqrt{2\gamma np(1-p)\log n} + 2\gamma \log n/3$. % for $p = \frac{8\gamma\log n}{\sqrt{n}}$ where $0 < c < 1.$
 The upper bound of $\|\be_i^{\top}(\BA - \E(\BA))\|$ follows directly from $\|\be_i^{\top}(\BA - \E(\BA))\| \leq \|\BA - \E(\BA)\|$.
 
 For the infinity norm bound of $\BDelta \bone_n$, we apply Bernstein inequality to $(\BDelta \bone_n)_i$ for each fixed $i$ and then take the union bound over $1\leq i\leq n$, and then have
 \[
\Pr\left( \|\BDelta\bone_n\|_{\infty} \geq t \right) = \Pr\left( \max_{1\leq i\leq n}\left|\sum_{j=1}^n (a_{ij} - p) \right| \geq t \right) \leq 2n\exp\left(-\frac{t^2/2}{np(1-p) + t/3}\right) \leq 2n^{-\gamma+1}
\]
where $t = \sqrt{2\gamma np(1-p)\log n} + 2\gamma \log n/3.$

\end{proof}

In order to prove Lemma~\ref{lem:Q1_low}, we first introduce the restricted isometry property of $\BA - p\BJ_n$ as follows.

\begin{lemma}[{\bf Restricted isometry property on manifold}]\label{lem:RIP}
Let $\BA$ be an Erd\H{o}s-R\'enyi graph $G(n,p)$. Then the following restricted isometry property holds
\begin{align}\label{eq:RIP}
\begin{split}
| \lag \BA - p\BJ_n, \BQ\BQ^{\top}\circ \BQ\BQ^{\top}\rag | & \leq \delta p\|\BQ\BQ^{\top}\|_F^2, \\
| \lag \BA - p\BJ_n, \BQ\BQ^{\top}\rag| & \leq \delta p\|\BQ\BQ^{\top}\|_F^2, 
\end{split}
\end{align}
uniformly for all $\BQ$ with probability at least 
\[
1 -4\exp\left( n\log \left(\frac{100}{\delta}\right) -\frac{n^2\delta^2p }{64(1-p) + 6\delta}\right)
\]
where $0<\delta<1.$
\end{lemma}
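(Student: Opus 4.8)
The plan is to prove both inequalities by a single net-plus-concentration argument, treating $f_1(\btheta):=\lag\BDelta,\BQ\BQ^\top\rag=\sum_{i\neq j}(a_{ij}-p)\cos(\theta_i-\theta_j)$ and $f_2(\btheta):=\lag\BDelta,\BQ\BQ^\top\circ\BQ\BQ^\top\rag=\sum_{i\neq j}(a_{ij}-p)\cos^2(\theta_i-\theta_j)$ on the same footing; here the diagonal drops out because we set $a_{ii}=p$, so each is a mean-zero sum of independent terms indexed by $i<j$. A useful first reduction is to note from~\eqref{eq:JQQsq} that $\|\BQ\BQ^\top\|_F^2\geq n^2/2$ for every admissible $\BQ$, so it suffices to establish the scale-free \emph{absolute} bounds $|f_1(\btheta)|,|f_2(\btheta)|\leq\tfrac12\delta p n^2$; these immediately imply the stated relative bounds $\delta p\|\BQ\BQ^\top\|_F^2$.

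First I would record the pointwise estimate. For a fixed $\btheta$, each summand of $f_1$ or $f_2$ is bounded in absolute value by $2(1-p)$ and the total variance is at most $2p(1-p)\|\BQ\BQ^\top\|_F^2\leq 2p(1-p)n^2$ (using $\cos^4\leq\cos^2$ for $f_2$). Scalar Bernstein, the scalar case of the Matrix Bernstein theorem quoted above, applied with deviation level $s=\tfrac14\delta p n^2$ then gives $\Pr(|f_k(\btheta)|\geq s)\leq 2\exp\!\big(-\tfrac{\delta^2 p n^2}{64(1-p)+6\delta}\big)$, which is exactly the per-point exponent appearing in the lemma. Next I would fix an $\eta$-net $\mathcal N$ of the angle-torus $[0,2\pi)^n$ with per-coordinate resolution $\eta\approx\delta/16$, so that $|\mathcal N|\leq(100/\delta)^n$, and take a union bound over $\mathcal N$, over the two signs, and over $k\in\{1,2\}$; this produces the factor $4\exp(n\log(100/\delta)-\cdots)$ in the probability.

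The heart of the argument, and the step I expect to be the main obstacle, is the passage from the net to a genuinely uniform bound, i.e.\ controlling $|f_k(\btheta)-f_k(\btheta')|$ for $\btheta$ and its nearest net point $\btheta'$ while keeping the required resolution a \emph{constant} (so the covering number stays $(100/\delta)^n$). Entrywise, $|\cos(\theta_i-\theta_j)-\cos(\theta_i'-\theta_j')|\leq 2\eta$ and likewise for $\cos^2$, whence $|f_k(\btheta)-f_k(\btheta')|\leq 2\eta\sum_{i\neq j}|a_{ij}-p|$. The delicate point is that this deviation must be bounded on the \emph{same} scale $\delta p n^2$ as the target: since $\sum_{i\neq j}|a_{ij}-p|\leq(1-p)\sum_{i\neq j}a_{ij}+pn^2\lesssim p n^2$ once one controls the edge count $\sum_{i\neq j}a_{ij}\lesssim p n^2$ (a mild Chernoff event, or via Lemma~\ref{lem:aux}), the $\ell_1$ mass of $\BDelta$ scales like $pn^2$, and the choice $\eta\sim\delta$ makes the deviation at most $\tfrac14\delta p n^2$. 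Pairing $\BDelta$ with the increment through this $\ell_\infty$--$\ell_1$ estimate is essential: a Cauchy--Schwarz bound by $\|\BDelta\|_F\sim\sqrt p\,n$, or an operator-norm bound, would lose a factor of $\sqrt p$ and force $\eta\sim\delta\sqrt p$, contaminating the covering number with an extraneous $\log n$. Combining the net estimate with this deviation bound yields $|f_k(\btheta)|\leq s+\tfrac14\delta p n^2\leq\tfrac12\delta p n^2\leq\delta p\|\BQ\BQ^\top\|_F^2$ uniformly, completing the proof.
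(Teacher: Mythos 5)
Your proposal is correct and follows essentially the same route as the paper's proof: a pointwise Bernstein bound at deviation scale $\sim\delta p n^2$, a union bound over a per-coordinate net of $[0,2\pi]^n$ with \emph{constant} resolution $\sim\delta$ (hence cardinality $(100/\delta)^n$), and a Lipschitz step pairing the increment with the $\ell_1$ mass $\sum_{i\neq j}|a_{ij}-p|\lesssim pn^2$ (the paper controls this via $n\|\BA\|+n^2p$ and Lemma~\ref{lem:aux}, you via an edge-count Chernoff bound, which is the same idea). Your preliminary reduction to the absolute bound $\tfrac12\delta p n^2$ using $\|\BQ\BQ^{\top}\|_F^2\geq n^2/2$ is a minor streamlining that lets you skip the paper's comparison of $\|\BQ\BQ^{\top}\|_F^2$ between a point and its nearest net point, but the argument is otherwise the same.
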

\begin{proof}

Let $\BX := \BQ\BQ^{\top}$ and $X_{ij} = \cos(\theta_i - \theta_j).$ We expect $\lag \BA, \BX \circ \BX\rag$ and $\lag \BA, \BX\rag$ will not deviate  too much from their expectations~\emph{uniformly} for all $\BX= \BQ\BQ^{\top}$ where
\[
\lag \BA, \BX\circ \BX\rag = \sum_{i,j} a_{ij} \cos^2(\theta_i - \theta_j), \quad \lag \BA, \BX\rag = \sum_{i,j} a_{ij} \cos(\theta_i - \theta_j)
\]
are subgaussian random variables. It suffices to only look at $\lag \BA, \BX\circ \BX\rag$ due to their similarities. 

Note that the centered variable satisfies
\[
\lag \BA, \BX\circ \BX \rag - p\|\BX\|_F^2 = \sum_{i \neq j}(a_{ij} - p) \cos^2(\theta_i - \theta_j)
\]
which follows from $\E(a_{ij}) = p.$
Each term $(a_{ij} - p)\cos^2(\theta_i - \theta_j)$ is contained in $[-p, 1-p].$ Moreover, the variance of $\sum_{i,j}(a_{ij} - p)\cos^2(\theta_i - \theta_j)$ satisfies
\[
\sigma^2 = \Var\left( 2\sum_{i<j} (a_{ij}-p)\cos^2(\theta_i - \theta_j)\right) = 4p(1-p)\sum_{i<j}\cos^4(\theta_i - \theta_j) \leq 2n^2p(1-p).
\]

By Bernstein's inequality, we have
\begin{align}
\Pr\left( \left| \lag \BA, \BX\circ \BX \rag  - p\|\BX\|_F^2  \right|  \geq t  \right) 
& \leq 2\exp\left( -\frac{t^2/2}{2n^2p(1-p) + t/3}\right) \nonumber \\
& \leq 2\exp\left( -\frac{n^2\delta^2 p}{64(1-p) + 6\delta } \right) \label{eq:AXX}
\end{align}
where we use
\[
t = \frac{\delta p\|\BX\|_F^2}{2}, \quad \frac{n^2}{2}\leq \|\BX\|_F^2 \leq n^2.
\]
%\Pr\left( \left| \lag \BA, \BX\circ \BX \rag  - p\|\BX\|_F^2  \right|  \geq  \frac{\delta p\|\BX\|_F^2}{2} \right) \leq 2\exp\left( -\frac{\delta^2 p^2\|\BX\|_F^4 }{2n}\right).

Thus there holds
\begin{equation}\label{eq:AXX1}
\left| \lag \BA, \BX\circ \BX\rag - p \|\BX\|_F^2 \right| \leq \frac{\delta p}{2}  \|\BX\|_F^2
\end{equation}
with probability at least 
\[
1 -2\exp\left( -\frac{n^2\delta^2 p}{64(1-p) + 6\delta } \right).
\]
\vskip0.25cm

So far we have established concentration inequality for a fixed set of $\{\theta_i\}_{i=1}^n.$ Now we need to extend the inequality~\eqref{eq:AXX1} to arbitrary $\theta\in [0,2\pi]^{n}$ uniformly.

Define $m \coloneqq \lfloor \eps^{-1}+1\rfloor$ and we construct an $\eps$-net over $[0,2\pi]^n$ as
\[
{\cal S}_{n,m} := \left\{ \widetilde{\btheta} = \{\widetilde{\theta}_i\}_{i=1}^n\in[0, 2\pi]^n: \widetilde{\theta}_i  = \frac{2l\pi}{m} \text{ for some }l:1\leq l\leq m\right\}
\]
with $|{\cal S}_{n,m}| = m^n$. 
The formula~\eqref{eq:AXX} holds for all $\widetilde{\btheta}$ on the net ${\cal S}_{n,m}$ with probability  at least 
\[
1 - 2|{\cal S}_{n,m}|\cdot \exp\left( -\frac{n^2\delta^2p}{64(1-p) + 6\delta}\right) = 1 -2 \exp\left( n\log m -\frac{n^2\delta^2p}{64(1-p) + 6\delta}\right)
\]
by taking the union bound for all $\widetilde{\btheta}\in {\cal S}_{n,m}.$

For arbitrary $\{\theta_i\}_{i=1}^n$, there always exists a point $\widetilde{\btheta}$ on the net such that $\|\btheta - \widetilde{\btheta}\|_{\infty} \leq \pi/m$, i.e., $|\theta_i - \widetilde{\theta}_{i}| \leq \pi/m$ for $1\leq i\leq n$. By letting $f(\btheta) := \lag \BA- p\BJ_n, \BQ\BQ^{\top}\circ \BQ\BQ^{\top}\rag$, then $f(\btheta)$ is a Lipschitz continuous function and satisfies 
\begin{align}
|f(\btheta) - f(\widetilde{\btheta})| & 
= \left| \sum_{i\neq j} (a_{ij} - p) (\cos^2(\theta_i - \theta_j) - \cos^2(\widetilde{\theta}_{i} - \widetilde{\theta}_j)) \right| \nonumber \\
& \leq 2\sum_{i\neq j}|a_{ij} - p| \cdot  |(\theta_i - \theta_j) - (\widetilde{\theta}_{i} - \widetilde{\theta}_j)| \nonumber \\
& \leq 4\|\btheta - \widetilde{\btheta}\|_{\infty}\cdot \sum_{i\neq j}|a_{ij} - p| \nonumber \\
& \leq 4\|\btheta - \widetilde{\btheta}\|_{\infty}\cdot \sum_{i\neq j}(a_{ij} + p) \nonumber \\
& \leq \frac{4\pi}{m} \left( n\|\BA\| + n^2p\right)  \label{eq:ftheta}
\end{align} 
where $|\theta_i - \widetilde{\theta}_i |\leq \pi/m$ and $\|\BA\| \leq np + \sqrt{2\gamma np(1-p)\log n} + 2\gamma \log n/3.$

Now we consider the difference between $\|\BX\|_F^2$ and $\|\widetilde{\BX}\|_F^2$ where $\widetilde{\BX} = \widetilde{\BQ}\widetilde{\BQ}^{\top}$ and $\widetilde{X}_{ij} = \cos(\widetilde{\theta}_i -\widetilde{\theta}_j )$, and there hold
\begin{align}
\left| \|\BX\|_F^2 - \|\widetilde{\BX}\|_F^2 \right|
& \leq \sum_{i\neq j}| \cos^2(\theta_i - \theta_j) - \cos^2(\widetilde{\theta}_i - \widetilde{\theta}_j)| \nonumber \\
& \leq 2\sum_{i\neq j} |(\theta_i - \theta_j) - (\widetilde{\theta}_i - \widetilde{\theta}_j)| \nonumber \\
& \leq \frac{4\pi n^2}{m}. \label{eq:XX}
\end{align}
Since $|f(\widetilde{\btheta})| \leq \frac{\delta p\|\widetilde{\BX}\|_F^2}{ 2}$, $f(\btheta)$ is bounded by
\begin{align*}
|f(\btheta)| & \leq |f(\widetilde{\btheta})| + |f(\btheta) - f(\widetilde{\btheta})| \\
& \leq \frac{\delta p}{2} \|\widetilde{\BX}\|_F^2 + \frac{4\pi n}{m} (\|\BA\| + np) \\
& \leq \frac{\delta p}{2}\left( \|\BX\|_F^2 +  \frac{4\pi n^2}{m}\right) + \frac{4\pi n}{m} (\|\BA\| + np)  \\
& \leq \frac{\delta p}{2}\|\BX\|_F^2 + \frac{2\pi n^2p}{m}\left(2 + \delta + \sqrt{\frac{2\gamma (1-p)\log n}{np}} + \frac{2\gamma \log n}{3np} \right)
\end{align*}
for any $\btheta$ where~\eqref{eq:A-EA},~\eqref{eq:ftheta}, and~\eqref{eq:XX} are used.
As a result, we have $|f(\btheta)| \leq \delta p\|\BX\|_F^2$ provided that the second term above is bounded by $\frac{n^2\delta p}{4}$, i.e.,
\begin{equation}\label{eq:complex}
m \geq \frac{8\pi}{\delta} \left(2 + \delta + \sqrt{\frac{2\gamma (1-p)\log n}{np}} + \frac{2\gamma \log n}{3np} \right).
\end{equation}
In conclusion, $|\lag \BA - p\BJ_n, \BQ\BQ^{\top} \circ \BQ\BQ^{\top}\rag| \leq \delta p\|\BX\|_F^2$ holds with probability at least 
\[
1 -2 \exp\left( n\log \left(\frac{100}{\delta}\right) -\frac{n^2\delta^2p }{64(1-p) + 6\delta}\right).
\]

Following the similar procedures, we are also able to prove
\[
|\lag \BA - p\BJ_n, \BQ\BQ^{\top}\rag| \leq \delta p\|\BQ\BQ^{\top}\|_F^2
\]
holds for all $\BQ$ uniformly with probability at least $1 - 2 \exp\left( n\log \left(\frac{100}{\delta}\right) - \frac{n^2\delta^2p}{64(1-p) + 6\delta}\right).$ For the completeness, we give a sketch of the proof below. 

Define $g(\btheta) : = \lag \BA-p\BJ_n, \BQ\BQ^{\top}\rag$ and it is straightforward to verify that
\begin{align*}
\Pr\left(|g(\btheta)|\geq \frac{\delta p\|\BQ\BQ^{\top}\|_F^2}{2}\right) = \Pr\left( \left| \lag \BA, \BX \rag  - p\lag \BJ, \BX\rag  \right|  \geq \frac{\delta p\|\BQ\BQ^{\top}\|_F^2}{2}  \right) & \leq 2\exp\left( -\frac{n^2\delta^2 p}{64(1-p) + 6\delta } \right) %\label{eq:AXX}
\end{align*}
By using the same $\eps$-net as $\mathcal{S}_{n,m}$ and taking the union bound over all elements in $\mathcal{S}_{n,m}$, we have
\[
\Pr\left( \sup_{\widetilde{\btheta}\in\mathcal{S}_{n,m}}|g(\widetilde{\btheta})| \geq \frac{\delta p\|\BQ\BQ^{\top}\|_F^2}{2}\right) \leq 2\exp\left( n\log m -\frac{n^2\delta^2p}{64(1-p) + 6\delta}\right).
\]
Moreover, there holds
\[
|g(\btheta) - g(\widetilde{\btheta})| \leq \frac{4\pi}{m}(n\|\BA\| + n^2p).
\]
for any $\btheta\in[0,2\pi]^n$ and a point $\widetilde{\btheta}$ on the net $\mathcal{S}_{n,m}$ such that $\|\btheta-\widetilde{\btheta}\|_{\infty}\leq \frac{\pi}{m}$. By using~\eqref{eq:XX} and following the exact same steps below~\eqref{eq:XX}, we can show that 
\[
|g(\btheta)| \leq \delta p\|\BQ\BQ^{\top}\|_F^2
\]
uniformly for all $\btheta\in[0,2\pi]^n$ for $m$ chosen as~\eqref{eq:complex}.
This finishes the proof of restricted isometry property.
\end{proof}

Lemma~\ref{lem:RIP} implies the following corollary directly.
\begin{corollary}[{\bf Restricted isometry property on manifold}]\label{cor:RIP}
Let $\BA$ be an Erd\H{o}s-R\'enyi graph $G(n,p)$, 
\begin{equation}\label{eq:para}
p = \frac{32\gamma\log n}{n^{1/3}}, \qquad \delta = \frac{1}{n^{1/3}}.
\end{equation}
Then the following restricted isometry property holds
\begin{align}\label{eq:RIP2}
\begin{split}
-\delta p\|\BQ\BQ^{\top}\|_F^2 & \leq \lag \BA - p\BJ_n, \BQ\BQ^{\top}\circ \BQ\BQ^{\top}\rag  \leq \delta p\|\BQ\BQ^{\top}\|_F^2, \\
-\delta p\|\BQ\BQ^{\top}\|_F^2 & \leq  \lag \BA - p\BJ_n, \BQ\BQ^{\top}\rag \leq \delta p\|\BQ\BQ^{\top}\|_F^2, 
\end{split}
\end{align}
uniformly for all $\BQ$ with probability at least 
\[
1 - 4\exp\left(n \left( \log (100n^{\frac{1}{3}}) - \frac{\gamma \log n}{2}\right)\right), \quad \gamma\geq 1.
\]

\end{corollary}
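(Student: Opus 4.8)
The plan is to obtain Corollary~\ref{cor:RIP} as a direct specialization of Lemma~\ref{lem:RIP}: I would substitute the prescribed values $p = 32\gamma n^{-1/3}\log n$ and $\delta = n^{-1/3}$ into the failure-probability exponent $n\log(100/\delta) - n^2\delta^2 p /(64(1-p)+6\delta)$ of the lemma and simplify. No new probabilistic input is required; the entire argument is arithmetic bookkeeping, and the two-sided estimates in~\eqref{eq:RIP2} are merely the absolute-value bounds~\eqref{eq:RIP} written out as a pair of inequalities.

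First I would dispatch the entropy term. Since $100/\delta = 100\, n^{1/3}$, the first summand of the exponent becomes $n\log(100\, n^{1/3})$, which is exactly the positive term in the claimed probability. Next I would lower-bound the concentration term. The numerator simplifies cleanly,
\[
n^2\delta^2 p = n^2 \cdot n^{-2/3} \cdot 32\gamma\, n^{-1/3}\log n = 32\gamma\, n\log n,
\]
while for the denominator it suffices to check $64(1-p) + 6\delta \le 64$, equivalently $6\delta \le 64 p$. This holds for all $n \ge 2$ and $\gamma \ge 1$ because $64p = 2048\gamma\, n^{-1/3}\log n$ dominates $6\delta = 6\, n^{-1/3}$. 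Hence
\[
\frac{n^2\delta^2 p}{64(1-p)+6\delta} \ge \frac{32\gamma\, n\log n}{64} = \frac{\gamma\, n\log n}{2},
\]
so the full exponent is at most $n\bigl(\log(100\, n^{1/3}) - \tfrac{1}{2}\gamma\log n\bigr)$, matching the stated bound; the leading factor $4$ is inherited directly from the two simultaneous estimates of Lemma~\ref{lem:RIP}.

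The hard part will be essentially nonexistent: there is no genuine obstacle here, only two sanity checks worth flagging. One is the inequality $6\delta \le 64p$ that licenses replacing the denominator by the clean constant $64$; the other is confirming that $\delta = n^{-1/3}$ lies in $(0,1)$ so that Lemma~\ref{lem:RIP} is applicable in the first place. Once these are recorded, the corollary follows immediately. It is worth noting that the slightly suboptimal scaling $p \sim n^{-1/3}\log n$ originates precisely from balancing the entropy term $n\log(100\, n^{1/3})$ against the exponent $\gamma n\log n/2$: for the probability to tend to one one needs $\gamma\log n/2 > \log(100\, n^{1/3})$, which is the place in the argument where the factor $n^{2/3}$ loss discussed after Theorem~\ref{thm:main2} enters.
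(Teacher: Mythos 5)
Your proposal is correct and follows exactly the paper's route: the paper treats Corollary~\ref{cor:RIP} as following ``directly'' from Lemma~\ref{lem:RIP} by substituting $p = 32\gamma n^{-1/3}\log n$ and $\delta = n^{-1/3}$, which is precisely your argument, and your arithmetic ($n^2\delta^2 p = 32\gamma n\log n$, the check $6\delta \leq 64p$ so the denominator is at most $64$, and the inherited factor $4$) is sound. The only difference is that the paper leaves this bookkeeping implicit, whereas you spell it out.
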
 

From now on, we set $p$ and $\delta$ as~\eqref{eq:para}.

\begin{proof}[{\bf Proof of Lemma~\ref{lem:Q1_low}}]
The second-order necessary condition implies
\[
\lag \ddiag(\BA\BQ\BQ^{\top}) - \BA\circ \BQ\BQ^{\top}, \BQ\BQ^{\top}\rag  = \lag \BA, \BQ\BQ^{\top}\rag - \lag \BA, \BQ\BQ^{\top}\circ\BQ\BQ^{\top}\rag\geq 0
\]
where $(\BQ\BQ^{\top})_{ii}=1$ for $i\in[n].$

Now we decompose the inequality above into
\[
p\lag \BJ_n, \BQ\BQ^{\top}\rag \geq p\lag \BJ_n, \BQ\BQ^{\top}\circ \BQ\BQ^{\top}\rag + \lag p\BJ_n - \BA, \BQ\BQ^{\top}\rag + \lag \BA - p\BJ_n, \BQ\BQ^{\top}\circ \BQ\BQ^{\top}\rag.
\]
From Lemma~\ref{lem:RIP}, we have a lower bound for the last two terms above, i.e.,
\begin{align*}
\lag p\BJ_n - \BA, \BQ\BQ^{\top}\rag & \geq -\delta p \|\BQ\BQ^{\top}\|_F^2, \\
\lag \BA - p\BJ_n, \BQ\BQ^{\top}\circ \BQ\BQ^{\top}\rag & \geq -\delta p\|\BQ\BQ^{\top}\|_F^2,
 \end{align*}
for all $\BQ.$ Hence the correlation between $\BQ^{\top}$ and $\bone_n$ is bounded below by
\[
\|\BQ^{\top}\bone_n\|^2 \geq (1-2\delta)\|\BQ\BQ^{\top}\|_F^2 \geq \frac{n^2(1-2\delta)}{2}.
\]

Note $\BQ\in\RR^{n\times 2}$ is rank-2, and we let $\BQ = [\bx~ \by]$ with $\bx\perp \by$. This is made possible by multiplying $2\times 2$ orthogonal matrix $\BR$ to the right of $\BQ$ where $\BR$ diagonalizes $\BQ^{\top}\BQ.$ Recall the first order necessary condition~\eqref{cond:1st} can be written into
\[
\BA\bx \circ \by = \BA\by\circ \bx.
\]
By letting $\BA = p\BJ_n + \BDelta$ where $\BDelta = \BA - p\BJ_n$, we have
\[
p \bone_n^{\top}\bx \cdot \by - p \bone_n^{\top}\by\cdot \bx  =  (\BDelta \by)\circ \bx - (\BDelta \bx)\circ \by.
\]
By taking the squared magnitude of the equation above, we arrive at
\[
p^2 (|\bone_n^{\top}\bx|^2 \|\by\|^2 + |\bone_n^{\top}\by|^2 \|\bx\|^2) \leq 2\|\BDelta\|^2(\|\bx\|^2 + \|\by\|^2) =2n\|\BDelta\|^2
\]
where $\bx\perp \by$, $\|\bx\|^2 + \|\by\|^2 = n,$ and 
\[
\| \BDelta \by\circ \bx\| = \|\diag(\bx) \BDelta \by\| \leq \|\BDelta\| \cdot \|\by\|.
\]
Therefore, we have %Since $\|\BDelta\| \leq\sqrt{2\gamma np(1-p)\log n}$, we have
\[
|\bone_n^{\top}\bx|^2 \|\by\|^2 + |\bone_n^{\top}\by|^2 \|\bx\|^2 \leq \frac{2n\|\BDelta\|^2}{p^2}.
\]

Without loss of generality, we assume $|\bone_n^{\top}\bx|^2 \geq n^2(1 - 2\delta)/4$ which follows from 
\[
 |\bone_n^{\top}\bx|^2 + |\bone_n^{\top} \by|^2 = \|\BQ^{\top}\bone_n\|^2 \geq \frac{n^2(1-2\delta)}{2}.
\] 
Therefore, $\|\by\|^2$ is bounded by
\[
\|\by\|^2 \leq \frac{2n\|\BDelta\|^2}{p^2|\bone_n^{\top}\bx|^2} \leq \frac{8\|\BDelta\|^2}{np^2(1-2\delta)}
\]
and combined with $\|\bx\|^2 + \|\by\|^2 = n$, we get
\[
\|\bx\|^2\geq n \left( 1 -  \frac{8\|\BDelta\|^2}{n^2p^2(1-2\delta)}\right).
\]
Hence, these local minimizers $\BQ$ would satisfy
\[
\|\BQ\BQ^{\top}\|_F^2 = \|\bx\|^4 + \|\by\|^4  \geq \|\bx\|^4 \geq n^2 \left( 1 -  \frac{8\|\BDelta\|^2}{n^2p^2(1-2\delta)}\right)^2
\]
which yields a refined bound of $\|\BQ^{\top}\bone_n\|^2$, i.e.,
\begin{align*}
\|\BQ^{\top}\bone_n\|^2 & \geq (1 - 2\delta) \|\BQ\BQ^{\top}\|_F^2 \geq (1-2\delta)n^2 \left( 1 -  \frac{8\|\BDelta\|^2}{n^2p^2(1-2\delta)}\right)^2 \\
& \geq (1-2\delta)n^2 \left( 1 - \frac{16\|\BDelta\|^2}{n^2p^2(1-2\delta)} \right) \\
& \geq n^2\left(1  - 2n^{-\frac{1}{3}}- n^{-\frac{2}{3}}\right) \\
& \geq n^2\left(1 - 3n^{-\frac{1}{3}}\right)
\end{align*}
where $\delta = n^{-1/3}$, $p = 32\gamma n^{-1/3} \log n$, and 
\begin{align*}
\frac{4\|\BDelta\|}{np} & \leq 4\left( \sqrt{\frac{ 2\gamma (1-p)\log n}{np}} + \frac{2\gamma \log n}{3np}\right) \\
& \leq n^{-\frac{1}{3}} \sqrt{1-p} + \frac{1}{12}n^{-\frac{2}{3}} \leq n^{-\frac{1}{3}}. 
\end{align*}
\end{proof}

\subsection{Supplementary technical details for the supporting examples}\label{ss:proofex}
\paragraph{The $n$-path graph.}
For the $n$-path graph, its adjacency matrix $\BA$ has a simple form: $a_{ij} = 1$ if $|i-j|=1$ and $0$ otherwise. If we look at the first-order necessary condition, we have
\[
0 = \sum_{j=1}^n a_{ij}\sin (\theta_i - \theta_j) =
\begin{cases} 
\sin(\theta_i - \theta_{i-1}) + \sin(\theta_i - \theta_{i+1}), & \text{if } 2\leq i\leq n-1, \\
\sin(\theta_1 - \theta_2), & \text{if } i=1, \\ 
\sin(\theta_{n-1} - \theta_n), & \text{if } i=n,
\end{cases}
\]
which implies that all critical points must belong to $\{\btheta: \theta_i = 0\text{ or } \pi \mod 2\pi, 1\leq i\leq n\}$ up to a global shift. However, the only global minimum of $E(\btheta)$ w.r.t.\ the $n$-path is $\btheta_0$. Note that for these critical points, the corresponding $\BQ$ is rank-1 and has entries $\pm1$. Now let $\BQ = \bone_{\Gamma} - \bone_{\Gamma^c}$, where $\Gamma = \{i : \theta_i = 0 \}$ and $\Gamma^c = \{i: \theta_i = \pi\}$, and $\bone_{\Gamma}\in\RR^n$ is the corresponding indicator function of $\Gamma$. 
Consider the inner product between the Hessian matrix and $\BQ\BQ^{\top}$, and there holds
\begin{align*}
\lag \ddiag(\BA\BQ\BQ^{\top}) - \BA\circ \BQ\BQ^{\top}, \BQ\BQ^{\top}\rag 
& = \Tr(\BA\BQ\BQ^{\top}) - \lag \BA, \BJ_n\rag \\
& = \bone_{\Gamma}^{\top}\BA\bone_{\Gamma} + \bone_{\Gamma^c}^{\top}\BA\bone_{\Gamma^c} - 2\bone_{\Gamma^c}^{\top}\BA\bone_{\Gamma} - \lag \BA, \BJ_n\rag\\
& = - 4 \bone_{\Gamma^c}^{\top}\BA\bone_{\Gamma} \leq 0
\end{align*}
since $\BQ\BQ^{\top}\circ \BQ\BQ^{\top} = \BJ_n.$ The inequality above is strict unless either $\Gamma$ or $\Gamma^c$ is empty due to the connectivity of the $n$-path. Therefore, for any critical point $\btheta$ where $\btheta\neq \btheta_0$, it cannot be a local minimum because the Hessian matrix is not positive semidefinite.

\paragraph{Wiley-Strogatz-Girvan networks.}
To make the presentation more self-contained, we briefly discuss the network in~\cite{WileySG06} in the language of linear algebra and discrete harmonic analysis, and show the twisted state is a local minimizer when $\mu$ is approximately smaller than $0.68$.
If each node is only connected to its $k$-nearest neighbors, the corresponding adjacency matrix, denoted by $\BA_k$, is a circulant matrix generated by $[0,\underbrace{1,1, \cdots, 1}_{k \text{ one's}}, 0, \cdots, 0, \underbrace{1, \cdots, 1}_{k \text{ one's}}]^{\top}\in\RR^n$, as shown in Figure~\ref{fig:WSG}. In particular, if $k=1,$ then
\[
\BA = 
\begin{bmatrix}
0 & 1 & 0 & \cdots & 0 & 1 \\
1 & 0 & 1 &\cdots & 0 & 0 \\
0 & 1 & 0 & \cdots & 0 & 0 \\
\vdots & \vdots & \vdots &\ddots & \vdots & \vdots \\
0 & 0 & 0 &\cdots & 0 & 1 \\
1 & 0 & 0 &\cdots & 1 & 0 \\
\end{bmatrix}.
\]
An important fact is that all circulant matrices are diagonalizable under Fourier basis, see~\cite[Chapter 4]{GolubV96}.

Let $\omega = e^{2\pi \mi/n}$ be the root of unity and $\bz \coloneqq [1, \omega, \cdots, \omega^{n-1}]^{\top}. $ Since $\bz$ is an eigenvector of $\BA_k$ and $\BA_k$ has real eigenvalues due to its symmetry, we have
\[
\Imag(\diag(\overline{\bz}) \BA_k\bz) = \lambda \Imag(\diag(\overline{\bz})\bz) = 0.
\]
By letting $\bx = \Real(\bz)$ and $\by = \Imag(\bz)$ and substituting $\bz = \bx + \mi\by$ into the equation above, there holds
\[
\Imag(\diag(\bx - \mi \by)\BA_k(\bx + \mi \by)) = \diag(\bx)\BA_k\by - \diag(\by)\BA_k\bx = 0
\]
which indicates that $\widetilde{\btheta} \coloneqq \{2\pi l/n\}_{l=1}^n$ is a critical point of $E(\btheta).$

It suffices to compute the Hessian matrix of $E(\btheta)$ at $\widetilde{\btheta}$. 
Note that $\BQ\BQ^{\top} = \Real( \bz\bz^*)$, and hence $\BA_k\circ \BQ\BQ^{\top} = \Real(\diag(\bz) \BA_k\diag(\overline{\bz}))$ is a symmetric circulant matrix and  so is
\begin{align*}
\BL_k & \coloneqq \ddiag(\BA_k \BQ\BQ^{\top}) - \BA_k \circ \BQ\BQ^{\top}.
\end{align*}

Therefore, $\BL_k$ is also diagonalizable under the Discrete Fourier Transform, and all of its eigenvalues are
\[
\lambda_l(\BL_k) = 2\sum_{j=1}^k \cos\left(\frac{2\pi j}{n}\right) - 2\sum_{j=1}^k \cos\left(\frac{2\pi j}{n}\right) \cos\left(\frac{2\pi (l-1) j}{n}\right), \quad 1\leq l\leq n.
\]
In particular, if $l=2$, the second smallest eigenvalue of the Hessian is
\[
\lambda_2(\BL_k) =  2\sum_{j=1}^k \cos\left(\frac{2\pi j}{n}\right) - 2\sum_{j=1}^k \cos^2\left(\frac{2\pi j}{n}\right) = \frac{2}{n}\lag \BA_k, \BQ\BQ^{\top}-\BQ\BQ^{\top}\circ \BQ\BQ^{\top}\rag.
\]
Therefore, $\widetilde{\btheta}$ is a local minimizer since $E(\btheta)$ if $\lambda_2(\BL_k)>0$ and this is guaranteed by $k \leq 0.34n$ approximately, as suggested in Figure~\ref{fig:lambda2}.

\section*{Acknowledgement}
S.L. and R.X. thank Tianqi Wu for a very fruitful and inspiring discussion as well as the introduction of the reference~\cite{Taylor12}. 

%\nocite{*}
\bibliography{phase_sync}
\bibliographystyle{abbrv}

\end{document}